\newcommand \al{\alpha}
\newcommand\be{\beta}
\newcommand\ga{\gamma}
\newcommand\de{\delta}
\newcommand\ze{\zeta}
\newcommand\et{\eta}
\renewcommand\th{\theta}
\newcommand\ka{\kappa}
\newcommand\la{\lambda}
\newcommand\si{\sigma}
\newcommand\ph{\varphi}
\newcommand\ps{\psi}
\newcommand\om{\omega}
\newcommand\Ga{\Gamma}
\newcommand\Th{\Theta}
\newcommand\Om{\Omega}
\newcommand\resp{resp.\ }
\newcommand\ie{i.e.\ }
\newcommand\oo{{\infty}}
\renewcommand\o{\circ}
\newcommand\x{\times}
\newcommand\on{\operatorname}
\newcommand\Ad{\on{Ad}}
\newcommand\ad{\on{ad}}\newcommand\id{\on{id}}
\newcommand\Emb{\on{Emb}}
\newcommand\Jac{\on{Jac}}
\newcommand\Den{\on{Den}}
\newcommand\Aut{\mathcal{A}ut}
\newcommand\F{\mathcal{F}}
\newcommand\Diff{\on{Diff}}
\newcommand\vol{\on{vol}}
\newcommand\ex{\on{ex}}
\newcommand\symp{\on{symp}}
\newcommand\ham{\on{ham}}
\newcommand\g{\mathfrak g}
\newcommand\h{\mathfrak h}
\newcommand\ou{\mathfrak o}
\newcommand\dd{{\bf d}}
\newcommand\QQ{\mathbf{Q}}
\newcommand\RR{\mathbb{R}}
\newcommand\PP{\mathbf{P}}
\newcommand\J{\mathbf{J}}
\newcommand\X{\mathfrak X}
\renewcommand\O{\mathcal O}
\newcommand\A{\mathcal A}
\newenvironment{proof}[1][Proof]{\noindent\textbf{#1.} }{\ \rule{0.5em}{0.5em}}
\begin{document}

\newtheorem{theorem}{Theorem}[section]
\newtheorem{definition}[theorem]{Definition}
\newtheorem{lemma}[theorem]{Lemma}
\newtheorem{remark}[theorem]{Remark}
\newtheorem{proposition}[theorem]{Proposition}
\newtheorem{corollary}[theorem]{Corollary}
\newtheorem{example}[theorem]{Example}




\title{Dual pairs for non-abelian fluids}
\author{Fran\c{c}ois Gay-Balmaz$^{1}$ and Cornelia Vizman$^{2}$ }

\addtocounter{footnote}{1}
\footnotetext{CNRS, LMD, \'Ecole Normale Sup\'erieure de Paris, France.
\texttt{francois.gay-balmaz@lmd.ens.fr }
\addtocounter{footnote}{1}}

\footnotetext{Department of Mathematics,
West University of Timi\c soara. RO--300223 Timi\c soara. Romania.
\texttt{vizman@math.uvt.ro}
\addtocounter{footnote}{1} }

\date{ }
\maketitle

\makeatother
\maketitle



\vspace{-.7cm}
\begin{abstract}
This paper is a rigorous study of two dual pairs of momentum maps arising in the context of fluid equations whose configuration Lie group is the group of automorphism of a trivial principal bundle, generically called here non-abelian fluids. It is shown that the actions involved are mutually completely orthogonal, which directly implies the dual pair property.
\end{abstract}



\section{Introduction}

\color{black}
It is well-known that the flow of the Euler equations of a perfect fluid can be formally interpreted as a geodesic on the group of volume preserving diffeomorphisms of the fluid domain, relative to an $L^2$ Riemannian metric, \cite{Arnold1966}. This result has been at the origin of many developments of the methods of symmetry and reduction for the study of incompressible fluids, their Clebsch variables and vortices, as initiated in \cite{MaWe83}. For instance, in \cite{MaWe83} J. E. Marsden and A. Weinstein discovered a pair of momentum maps associated to the Euler equations that geometrically justifies the existence of Clebsch canonical variables for ideal fluid motion and explains the Hamiltonian structure of point vortex solutions in terms of the (Lie-Poisson) Hamiltonian structure of the Euler equations. As claimed in \cite{MaWe83}, and rigorously shown in \cite{GBVi2011}, this pair of momentum maps forms a \textit{dual pair} in the sense that the Lie group associated to each of the momentum maps acts transitively on the level set of the other momentum map. In order to obtain such a result, it has been necessary to restrict each of the acting Lie groups to its commutator subgroup and to centrally extend these subgroups. This gives rise to the quantomorphism group as a central extension of the group of Hamiltonian diffeomorphisms, and to the Ismagilov central extension of the group of exact volume preserving diffeomorphisms.

A pair of momentum maps has also been described for PDEs associated to geodesics on the group of (non necessarily volume preserving) diffeomorphisms, the so-called EPDiff equations, by D. D. Holm  and J. E. Marsden in \cite{HoMa2004}. One of the momentum maps provides singular solutions of the PDE (e.g. for the $H^1$ metric, as it does for the peakon solutions of the Camassa-Holm equations \cite{CaHo1993}), whereas the other momentum map provides a constant of motion for the collective dynamics of these singular solutions, by the Noether Theorem. It has been shown in \cite{GBVi2011} that these momentum maps also form a dual pair.

Since the seminal result of Arnold, the geometric formulation via diffeomorphism groups and the associated methods of symmetry and reduction have been developed in order to apply to a large class of equations arising in hydrodynamics, such as compressible fluid and magnetohydrodynamics in \cite{MaRaWe1984}.
More recently, it has been observed (\cite{GBRa2008a}, \cite{GBRa2009}, \cite{GBRa2011a}) that several models of complex fluids and (possibly non-abelian Yang-Mills) charged fluids require the use of the group of automorphisms of a principal bundle as configuration Lie group (or its volume preserving version), instead of a group of diffeomorphisms. In this case, the base of the principal bundle is the fluid domain, whereas its structure group is given by the order parameter group in the case of complex fluids, or by the symmetry Lie group of the underlying Yang-Mills theory in the case of charged fluids. We refer to Theorem 3.3 in \cite{GBRa2011a} for a summary of the geometric formulation of several models of non-abelian fluids. We refer to \cite{BiJaLiNaPi2003}, \cite{JaNaPiPo2004}, \cite{BaMaMu2006} and reference therein, for further information about the physics literature on non-abelian fluids.

The simplest situation, which is also the case of interest for this paper, corresponds to geodesic equations on the automorphism group of a principal bundle (referred to as EPAut equations) or on its volume preserving subgroup (referred to as EPAut$_{\rm vol}$ equations). Examples are provided by the equations of motion of a non-abelian charged perfect fluid moving under the influence of a fixed external Yang-Mills field, which describe the geodesic flow of the Kaluza-Klein $L^2$ metric on the group of volume preserving automorphisms (\cite{GBRa2008a}). As explained in \cite{GBTrVi2013}, other examples treated in the literature can be seen as geodesic equations on the automorphism group of a trivial principal bundle. These are the two-component Camassa-Holm equations \cite{ChLiZh2005,Ku2007}, its modified version considered in \cite{HoOnTr2009} and its higher dimensional and anisotropic versions studied in \cite{HoTr2008}. In \cite{HoOnTr2009}, \cite{HoTr2008}, the modified versions were also shown to admit singular solutions given by a momentum map.

A pair of momentum maps explaining geometrically the above singular solutions has been considered in \cite{GBTrVi2013} in the general context of the EPAut equations on arbitrary principal bundles. In particular, it was found that these momentum maps are associated to the actions of two groups of automorphisms on a manifold of equivariant embeddings, thereby extending, from the EPDiff to the EPAut case, the momentum map setting developed in \cite{HoMa2004}.

Concerning the volume preserving situation (i.e. the EPAut$_{\rm vol}$ case), a pair of momentum maps has been found in \cite{GBTrVi2013} that extends the dual pair for the ideal fluid found in \cite{MaWe83}. Interestingly, the proper definition of one of the momentum maps needs the introduction of new infinite-dimensional Lie groups, such as the group of special Hamiltonian automorphisms and the group of Vlasov chromomorphisms, defined in \cite{GBTrVi2013}.
{In this case, one of the momentum maps yields a Clebsch representation that extends the classical Clebsch representation for ideal fluid to the non-abelian case, whereas the other momentum map recovers, as a particular case, the expression of the Klimontovich particle solution of the Yang-Mills Vlasov equations.}

\paragraph{Dual pairs of momentum maps.} Let us briefly recall the definition of a dual pair in the finite dimensional context, as formalized in \cite{We83}. Let $(M,\omega)$ be a finite dimensional symplectic manifold and let $P_1, P_2$ be two finite dimensional Poisson manifolds. A pair of Poisson mappings
\begin{equation*}
P_1\stackrel{\J_L}{\longleftarrow}(M,\om)\stackrel{\J_R}{\longrightarrow} P_2
\end{equation*}
is called a dual pair if $\ker T\J_L$ and $\ker T\J_R$ are symplectic orthogonal complements of one another, i.e.
$(\ker T\J_L)^\om=\ker T\J_R$.
In infinite dimensions, due to the weakness of the symplectic form, one has to impose both identities \cite{GBVi2011}
\begin{equation}\label{strong_dual_pair}
(\ker T\J_L)^\om=\ker T\J_R\quad\text{and}\quad (\ker T\J_R)^\om=\ker T\J_L.
\end{equation}

In many cases of interest, and this will be the case in the present paper too, the Poisson maps $\J_1$ and $\J_2$ are momentum mappings 
arising from the commuting Hamiltonian actions of two Lie groups  $H$ and $G$ on $M$. 
We assume that both momentum maps are equivariant, so that they are Poisson maps with respect to the Lie-Poisson structure on the dual Lie algebras $\h^*$ and $\g^*$:
\begin{equation}\label{dp}
\h^*\stackrel{\J_L}{\longleftarrow}(M,\om)\stackrel{\J_R}{\longrightarrow}\g^*.
\end{equation}
In this case the dual pair conditions \eqref{strong_dual_pair} become
$\mathfrak{g}_M^\om=\mathfrak{h}_M^{\om\om}$ and $\mathfrak{h}_M^\om=\mathfrak{g}_M^{\om\om}$, 
because $\ker T\J_R=\g_M^\om$ and $\ker T\J_L=\h_M^\om$.

The actions are said to be {\it mutually completely orthogonal} \cite{Libermann-Marle} if 
the $G$-orbits  and the $H$-orbits are symplectic orthogonal to each other.
In infinite dimensions we need again two identities
\begin{equation}\label{mutcomort}
\mathfrak{g}_M=\mathfrak{h}_M^{\om}\quad\text{and}\quad \mathfrak{h}_M=\mathfrak{g}_M^{\om}.
\end{equation}
These can be rewritten as $\g_M=\ker T\J_L$
and $\h_M=\ker T\J_R$, 
which means that the infinitesimal actions of $\g$ \resp $\h$ 
on level sets of momentum maps $\J_L$ \resp $\J_R$ are transitive.

In finite dimensions the mutually completely orthogonality identities \eqref{mutcomort} are equivalent to the fact that \eqref{dp} is a dual pair. {In the infinite dimensional case, the mutually complete orthogonality property \eqref{mutcomort} implies the  dual pair property, but the converse is not true. A counterexample is provided by the dual pair associated to perfect free boundary fluids, as shown in \cite{GBV2012}}.
In fact, both dual pair conditions \eqref{strong_dual_pair} 
follow from just one of the conditions \eqref{mutcomort}.

Dual pair structures arise naturally in classical mechanics. For example, in \cite{Ma1987} (see also \cite{CuRo1982}, \cite{GoSt1987} 
and \cite{Iw1985}) it was shown that the concept of dual pair of momentum maps can be useful for the study of
bifurcations in Hamiltonian systems with symmetry.
More recently, it was shown in \cite{GBVi2013} how the rigorous dual pair property of the ideal fluid momentum maps can be used to describe a new class of infinite dimensional coadjoint orbits of the Hamiltonian group.


\paragraph{Plan of the paper.} The goal of the present paper is to show that the pairs of momentum maps found in \cite{GBTrVi2013} in the context of the EPAut and EPAut$_{\rm vol}$ equations, form two \textit{dual pairs}. In the infinite dimensional situation, this means that both the equalities $(\ker T\J_1)^\omega= \ker T\J_2$ and $(\ker T\J_2)^\omega= \ker T\J_1$ are true (\cite{GBVi2011}), since the equivalence between these equalities no longer holds in the infinite dimensional situation. In fact, we will show the stronger result that the actions are mutually completely orthogonal, in the sense that the orbits of one action are symplectic orthogonal to the orbits of the other action, \textit{and vice versa}.

The plan of the paper is the following. In Section \ref{sec_2}, we recall the expression of the Euler-Poincar\'e equations on the automorphism group of a principal bundle (the EPAut equations) in the case when the principal bundle is trivial. Then after reviewing the pair of momentum maps associated to the EPDiff equations (\cite{HoMa2004}), we will recall some facts concerning the pair of momentum maps associated to the EPAut equations (\cite{GBTrVi2013}).
These momentum maps will be shown in Section \ref{sec_3} to arise from mutually completely orthogonal actions and, therefore, to form a dual pair.
In Section \ref{secti}, we recall the expression of the Euler-Poincar\'e equations on the group of volume preserving automorphisms of a trivial principal bundle (the EPAut$_{\rm\,vol}$ equations) and review from \cite{GBTrVi2013} some facts about the associated pair of momentum maps. We will then focus on a particular case relevant for the Yang-Mills Vlasov equations, arising when the total space of one of the principal bundles is a cotangent bundle. Finally, in Section \ref{sec_5} we show, still in this particular case and when the bundles are trivial, that the pair of momentum maps associated to the EPAut$_{\rm\,vol}$ arise from mutually completely orthogonal actions and is therefore a dual pair.

\medskip

\section{The EPAut equations and momentum maps}\label{sec_2}

In this Section we recall the expression of the Euler-Poincar\'e equations on the automorphism group of a principal bundle (the EPAut equations) in the case when the principal bundle is trivial, and we review some facts concerning the associated pair of momentum maps.

\subsection{The automorphism group and Euler-Poincar\'e equations}

The automorphism group $\mathcal{A}ut(P)$ of a right principal bundle $\pi: P\rightarrow M$, with structure group
$\mathcal{O}$, consists of all $\mathcal{O}$-equivariant smooth diffeomorphisms of $P$.
Any automorphism $\tilde\ph\in \mathcal{A}ut(P)$ induces a smooth diffeomorphism $\ph$ of the base $M$, by the condition 
$\pi\circ\tilde\ph=\ph\circ\pi$. The diffeomorphisms of $M$ that can be obtained this way form the subgroup $ \operatorname{Diff}(M)_{[P]}\subset \operatorname{Diff}(M)$ of diffeomorphisms preserving the isomorphism class of $P$.
It is a subgroup which contains the identity component of $\Diff(M)$,
so it consists of connected components of $\Diff(M)$.
We will denote by $\mathfrak{aut}(P)$  the Lie algebra of $ \mathcal{A}ut(P)$. It consists of all $\mathcal{O}$-equivariant smooth vector fields on $P$.

When the bundle $P$ is trivial, i.e. $P\simeq M\times \mathcal{O}\rightarrow M$, the group of all
automorphisms of $P$ is isomorphic to the semidirect product group
\[
\mathcal{A}ut(P)\simeq\operatorname{Diff}(M)\,\circledS\,\mathcal{F}(M,\mathcal{O}),
\]
where $\mathcal{F}(M,\mathcal{O})$ denotes the group of smooth $\mathcal{O}$-valued functions defined on $M$. Let us recall that the group structure of the semidirect product reads 
$$( \varphi _1 , a_1 )(\varphi _2 , a _2 )=( \varphi _1 \circ \varphi _2 , (a _1 \circ \varphi _2 ) a _2 ).$$
To a couple $(\ph,a)\in \operatorname{Diff}(M)\,\circledS\,\mathcal{F}(M,\mathcal{O})$, is associated the automorphism
\begin{equation}\label{sdir}
(x,g)\in M\times\mathcal{O}\mapsto (\ph(x),a(x)g)\in M\times\mathcal{O}.
\end{equation}
The Lie algebra of the automorphism group is isomorphic to the semidirect product Lie algebra
$\mathfrak{aut}(P)\simeq\mathfrak{X}(M)\,\circledS\,\mathcal{F}(M,\mathfrak{o})$,
where $\mathfrak{o}$ is the Lie algebra of the structure group
$\mathcal{O}$, $\mathfrak{X}(M)$ denotes the space of smooth vector fields on $M$, and $\mathcal{F}(M,\mathfrak{o})$ denotes the Lie algebra of smooth $\mathfrak{o}$-valued functions defined on $M$.

\color{black}

\paragraph{Euler-Poincar\'e equations, EPDiff, and EPAut.} Let $G$ be a Lie group with Lie algebra $ \mathfrak{g}  $ and consider a right $G$-invariant Lagrangian $L: TG \rightarrow \mathbb{R}  $ defined on the tangent bundle $TG$ of $G$. Let $\ell: \mathfrak{g}  \rightarrow \mathbb{R}  $ be the reduced Lagrangian associated to $L$, that is $\ell( \xi )=L(g, \dot g)$, with $\xi = \dot g g ^{-1} $. By applying the process of Lagrangian reduction, the Euler-Lagrange equations for $L$ are equivalent to the Euler-Poincar\'e equations for $\ell$, 
\[
\partial _t \frac{\delta \ell}{\delta \xi }+ \operatorname{ad}^*_ \xi \frac{\delta \ell}{\delta \xi }=0,
\] 
see e.g. \cite{MaRa99} for a detailed exposition. When $G=\operatorname{Diff}(M)$, these equations are called the EPDiff equations, see \cite{HoMa2004}. Similarly, when $G= \mathcal{A}ut(P)$, these equations will be called the \textit{EPAut equations}.
In the following Proposition we give the EPAut equations in the case when $P$ is a trivial principal bundle. We refer to \cite{Vi2001}, \cite{Vi2008}, for the geodesic case and to \cite{GBTrVi2013} for the general case.

\begin{proposition}[The EPAut equations on a trivial bundle] {\rm \cite{GBTrVi2013}}\label{EPaut_trivial}
Consider a reduced Lagrangian  $\ell:\mathfrak{X}(M)\,\circledS\,\mathcal{F}(M,\mathfrak{o}) \to\RR$ and identify the dual Lie algebra with the space $\Om^1(M)\otimes\Den(M)\x\F(M,\ou^*)\otimes\Den(M)$ by using the $L^2$ pairing, where $\Den(M)$ denotes the space of densities on $M$. Then the EPAut equations are
\begin{equation}\label{EPAut_trivial}
\left\{\begin{array}{l}
\vspace{0.2cm}\displaystyle\frac{\partial}{\partial t} \frac{\delta \ell}{\delta\mathbf{u}} +\pounds_{\mathbf{u}}\frac{\delta \ell}{\delta\mathbf{u}}+\frac{\delta \ell}{\delta{\nu}}\!\cdot\!\mathbf{d}{\nu}=0\\
\displaystyle\frac{\partial}{\partial t}\frac{\delta \ell}{\delta{\nu}}+\pounds_{\mathbf{u}}\frac{\delta \ell}{\delta{\nu}}+\operatorname{ad}^*_{{\nu}}\frac{\delta \ell}{\delta{\nu}}=0,
\end{array}\right.
\end{equation}
where $(\mathbf{u},{\nu})\in\mathfrak{X}(M)\,\circledS\,\mathcal{F}(M,\mathfrak{o})$ and the operator $\pounds_\mathbf{u}$ denotes the Lie derivative acting on tensor densities.
\end{proposition}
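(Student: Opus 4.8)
The plan is to specialize the abstract Euler--Poincar\'e equation $\pa_t\frac{\delta\ell}{\delta\xi}+\ad^*_\xi\frac{\delta\ell}{\delta\xi}=0$ to the semidirect product Lie algebra $\g=\X(M)\,\circledS\,\F(M,\ou)$, writing $\xi=(\mathbf{u},\nu)$ and $\frac{\delta\ell}{\delta\xi}=(\frac{\delta\ell}{\delta\mathbf{u}},\frac{\delta\ell}{\delta\nu})$. Everything reduces to computing the bracket $\ad_{(\mathbf{u},\nu)}(\mathbf{w},\mu)$ on $\g$ and then its dual $\ad^*_{(\mathbf{u},\nu)}$ with respect to the $L^2$ pairing that identifies $\g^*$ with $\Om^1(M)\otimes\Den(M)\x\F(M,\ou^*)\otimes\Den(M)$. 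Once $\ad^*$ is known, splitting the equation into its $\X(M)$- and $\F(M,\ou)$-components yields the two displayed equations at once.

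First I would compute the Lie bracket. The cleanest route is to realize $\g=\mathfrak{aut}(P)$ as the algebra of $\O$-invariant vector fields on $P\simeq M\times\O$, writing the element associated with $(\mathbf{u},\nu)$ as $(x,g)\mapsto(\mathbf{u}(x),TR_g\nu(x))$ (the right-invariant extension along the fibers), and then taking the ordinary Jacobi--Lie bracket of vector fields on $P$. Decomposing into horizontal and vertical parts, the horizontal--horizontal term gives $[\mathbf{u},\mathbf{w}]$ on $M$, the two mixed terms give $\mathbf{u}[\mu]-\mathbf{w}[\nu]$ (directional derivatives of the $\ou$-valued functions), and the vertical--vertical term gives $-[\nu,\mu]_\ou$, the sign coming from the fact that right-invariant vector fields on $\O$ realize the negative of the bracket of $\ou$. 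Taking into account that the $\ad$ appearing for a right-invariant system is the negative of the Jacobi--Lie bracket of vector fields (consistent with the $+\ad^*$ sign in the stated equation), one obtains
\[
\ad_{(\mathbf{u},\nu)}(\mathbf{w},\mu)=\big(-[\mathbf{u},\mathbf{w}],\,-\mathbf{u}[\mu]+\mathbf{w}[\nu]+[\nu,\mu]_\ou\big).
\]

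Next I would dualize term by term against a pair $(\mathbf m,\mathbf k)\in\g^*$, with $\mathbf m=\frac{\delta\ell}{\delta\mathbf{u}}$ and $\mathbf k=\frac{\delta\ell}{\delta\nu}$, using integration by parts for the $L^2$ pairing. The term $\langle\mathbf m,-[\mathbf{u},\mathbf{w}]\rangle$ integrates by parts to $\langle\pounds_\mathbf{u}\mathbf m,\mathbf{w}\rangle$, reproducing the EPDiff operator $\ad^*_\mathbf{u}\mathbf m=\pounds_\mathbf{u}\mathbf m$; the term $\langle\mathbf k,-\mathbf{u}[\mu]\rangle$ likewise gives $\langle\pounds_\mathbf{u}\mathbf k,\mu\rangle$; the term $\langle\mathbf k,[\nu,\mu]_\ou\rangle$ gives $\langle\ad^*_\nu\mathbf k,\mu\rangle$; and the cross term $\langle\mathbf k,\mathbf{w}[\nu]\rangle=\int_M\langle\mathbf k,\mathbf{d}\nu\rangle(\mathbf{w})$ pairs the $\ou^*$-part of $\mathbf k$ with the $\ou$-valued one-form $\mathbf{d}\nu$ to produce a one-form density, namely the coupling $\frac{\delta\ell}{\delta\nu}\!\cdot\!\mathbf{d}\nu$, contracted against $\mathbf{w}$. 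Collecting the $\mathbf{w}$- and $\mu$-components gives $\ad^*_{(\mathbf{u},\nu)}(\mathbf m,\mathbf k)=\big(\pounds_\mathbf{u}\mathbf m+\mathbf k\!\cdot\!\mathbf{d}\nu,\ \pounds_\mathbf{u}\mathbf k+\ad^*_\nu\mathbf k\big)$, and substituting into the abstract Euler--Poincar\'e equation yields \eqref{EPAut_trivial}.

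The main obstacle is the bookkeeping of signs and conventions rather than any deep difficulty: one must pin down the Jacobi--Lie versus right-invariant bracket convention, the sign produced by the right-invariant extension of the $\ou$-valued part, and the correct integration-by-parts rules for $\ou^*$-valued densities and one-form densities under the chosen $L^2$ identification of $\g^*$. The only genuinely new feature relative to the EPDiff case is the coupling term $\frac{\delta\ell}{\delta\nu}\!\cdot\!\mathbf{d}\nu$, and verifying that it is exactly the $\X(M)^*$-component produced by the off-diagonal part of the semidirect-product bracket — rather than some divergence correction or a term landing in the wrong component — is the step that deserves the most care.
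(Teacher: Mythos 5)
Your derivation is correct and follows the standard route that the paper itself relies on: the paper offers no proof of Proposition \ref{EPaut_trivial}, deferring instead to \cite{Vi2001}, \cite{Vi2008} and \cite{GBTrVi2013}, where exactly this specialization of the abstract Euler--Poincar\'e equation to the semidirect product $\mathfrak{X}(M)\,\circledS\,\mathcal{F}(M,\mathfrak{o})$ is carried out. Your bracket $\operatorname{ad}_{(\mathbf{u},\nu)}(\mathbf{w},\mu)=\bigl(-[\mathbf{u},\mathbf{w}],\,-\mathbf{u}[\mu]+\mathbf{w}[\nu]+[\nu,\mu]_{\mathfrak{o}}\bigr)$ agrees with differentiating $\operatorname{Ad}_{(\varphi,a)}(\mathbf{w},\mu)=\bigl(\varphi_*\mathbf{w},\,((\mathbf{d}a\cdot\mathbf{w})a^{-1}+\operatorname{Ad}_a\mu)\circ\varphi^{-1}\bigr)$ for the group law $(\varphi_1,a_1)(\varphi_2,a_2)=(\varphi_1\circ\varphi_2,(a_1\circ\varphi_2)a_2)$, and the $L^2$ dualization, including the identification of the cross term with $\frac{\delta\ell}{\delta\nu}\!\cdot\!\mathbf{d}\nu$ in the $\mathfrak{X}(M)^*$-component, yields \eqref{EPAut_trivial} with the correct signs.
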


In the special case $M=S^1$ and $G=S^1$, for suitable Lagrangians, we obtain 
the two component Camassa-Holm equation \cite{ChLiZh2005} 
and the modified two-component Camassa-Holm equation \cite{HoOnTr2009}. \textcolor{black}{Equations for complex and nonabelian fluids are obtained from the Euler-Poincar\'e equations \eqref{EPAut_trivial} by extending them to include advected quantities or/and coupling them with the Euler-Lagrange equations for the Yang-Mills fields. We refer to \cite{GiHoKu1983}, \cite{Ho1986}, \cite{HoKu1988}, \cite{Ho2002} for the description of the noncanonical Hamiltonian structures and to \cite{GBRa2008a}, \cite{GBRa2009}, \cite{GBRa2011a} for the Lagrangian and Hamiltonian reductions approaches.}

\begin{remark}[Manifold structures]\label{smooth}{\rm In this paper, all the (finite dimensional) manifolds involved are smooth, Hausdorff, and paracompact (to admit partition of unity).
All the maps considered are smooth (i.e. $C^\infty$). All the manifolds are assumed to have no boundary.

The space of smooth functions defined on a compact manifold is a Fr\'echet manifold
in a natural way. The space of embeddings is an open subset 
of this Fr\'echet manifold, hence a Fr\'echet manifold itself \cite{KM}.
}
\end{remark}


\subsection{A pair of momentum maps for the EPAut equations}

\color{black}
\paragraph{Review of the EPDiff case.} Let $S$ and $M$ be two manifolds with $ \operatorname{dim}S \leq \operatorname{dim}M$. Suppose that $S$ {is compact} and carries a volume form $ \mu _S $. 
As explained in  Remark \ref{smooth}, the space $ \operatorname{Emb}(S,M)$ of embeddings of $S$ into $M$ is a Fr\'echet manifold.
Recall from \cite{HoMa2004} that the pair of momentum maps associated to the EPDiff equation is obtained by considering the cotangent lifted action of the groups $ \operatorname{Diff}(S)$ and $ \operatorname{Diff}(M)$ on the manifold $ \operatorname{Emb}(S,M)$:
\begin{equation}\label{EPDiff_dualpair} 
\mathfrak{X}(M) ^\ast \stackrel{\;\J_L}{\longleftarrow}T^* \operatorname{Emb}(S,M) \stackrel{\J_R}{\longrightarrow} \mathfrak{X}(S) ^\ast.
\end{equation}
Note that the tangent space $T_ \mathbf{Q} \operatorname{Emb}(S, M)$ consists of vector fields $\mathbf{V} _ \mathbf{Q} :S \rightarrow TM$ covering the embedding $ \mathbf{Q} $. Since a volume form $ \mu _S $ has been fixed, the cotangent space $T^*_ \mathbf{Q} \operatorname{Emb}(S,M)$ can be identified with the space of 1-forms $\mathbf{P} _ \mathbf{Q}  : S \rightarrow T^*M$ covering $ \mathbf{Q} $.

The left momentum map 
$$ \mathbf{J} _L(\mathbf{P}_ \mathbf{Q}  )=\int_S\mathbf{P}_ \mathbf{Q}  (x)\delta (m- \mathbf{Q} (x))\mu _S$$ 
provides the formula for singular solutions of the EPDiff equations, whereas the right momentum map 
$$ \mathbf{J} _R(\mathbf{P}_ \mathbf{Q} )= \mathbf{P}_ \mathbf{Q}  \cdot T \mathbf{Q}$$ 
provides a Noether conserved quantity for the (collective) Hamiltonian dynamics of these singular solutions in terms of the canonical variable $\mathbf{P}_ \mathbf{Q} \in T^* \operatorname{Emb}(S,M)$. Here $T \mathbf{Q} :TS \rightarrow TM$ denotes the tangent map to the embedding $\mathbf{Q}$.

These expressions of the momentum maps are obtained from the following general formula. Let $G$ be a Lie group acting on a manifold $Q$, and consider the cotangent lifted action of $G$ on $T^*Q$. Then this action admits the equivariant momentum map $\mathbb{J}:T^*Q\rightarrow\mathfrak{g}^*$ given by
\begin{equation}\label{ctgm}
\langle \mathbb{J}(\alpha_q),\xi\rangle=\langle\alpha_q,\xi_Q(q)\rangle,\quad \alpha_q\in T^*Q, \quad \xi\in\mathfrak{g},
\end{equation}
where $\xi_Q$ is the infinitesimal generator of the action of $G$ on $Q$ associated to the Lie algebra element $\xi$. Recall that the momentum map $ \mathbb{J}  $  verifies the condition $ \mathbf{d}  \mathbb{J}  _ \xi = \mathbf{i} _{ \xi _{T^*Q}} \Omega _{can}$, where $ \Omega _{can}$ is the canonical symplectic form, $ \xi _{T^*Q}$ is the infinitesimal generator of the $G$-action on $T^*Q$, and $ \mathbb{J}  _ \xi :T^*Q \rightarrow \mathbb{R}  $ is the function defined by $ \mathbb{J}  _ \xi ( \alpha _q ):= \left\langle \mathbb{J} ( \alpha _q ), \xi \right\rangle $. By equivariance, $ \mathbb{J} $ is a Poisson map relative to the canonical symplectic form on $T^*Q$ and the Lie-Poisson structure on $ \mathfrak{g}  ^\ast $.

\paragraph{Momentum maps for the EPAut equations.} In \cite{GBTrVi2013}, a pair of momentum maps analogue to \eqref{EPDiff_dualpair} has been constructed for the EPAut equations.
It is obtained by considering two principal $ \mathcal{O} $-bundles $\pi _S:P _S \rightarrow S$ and $\pi _M:P_M  \rightarrow M$, and the cotangent lifted action of the automorphism groups $ \mathcal{A}ut(P_S)$ and $ \mathcal{A}ut(P_M)$ on the manifold $ Q_{KK}:=\operatorname{Emb}_ \mathcal{O} (P_S, P_M)$ of all smooth $ \mathcal{O} $-equivariant embeddings of $P_S$ into $P_M$:
\begin{equation}\label{EPAut_dualpair} 
\mathfrak{aut}(P_M) ^\ast \stackrel{\J_L}{\longleftarrow}T^* \operatorname{Emb}_ \mathcal{O} (P_S,P_M) \stackrel{\J_R}{\longrightarrow} \mathfrak{aut}(P_S) ^\ast.
\end{equation}
We assume that both $S$ and $ \mathcal{O} $ are compact,
so $P_S$ is compact too and $Q_{KK}$ is a Fr\'echet manifold (see Remark \ref{smooth}).

When both $P_S$ and $P_M$ are trivial principal bundles, we have the identification
\begin{equation}\label{QKK_trivial}
Q_{KK}\simeq \operatorname{Emb}(S,M)\times\mathcal{F}(S,\mathcal{O}),
\end{equation}
see Lemma 3.7 in \cite{GBTrVi2013}. 
In a similar way with \eqref{sdir}, the equivariant embedding associated to a pair $(\mathbf{Q} , \gamma ) \in  \operatorname{Emb}(S,M)\times\mathcal{F}(S,\mathcal{O}) $ is the map $(x,g)\in S\x\O\mapsto(\QQ(x),\ga(x)g)\in M\x\O$. The tangent space $T_\gamma \mathcal{F} (S, \mathcal{O} )$ consists of functions $ v_ \gamma : S \rightarrow T \mathcal{O} $ covering $ \gamma $ and the cotangent space $T^*_ \gamma \mathcal{F} (S, \mathcal{O} )$ consists of functions $ \kappa _ \gamma :S \rightarrow T^* \mathcal{O} $ covering $ \gamma $.

We now recall from \cite{GBTrVi2013} the expression of the cotangent momentum maps in the case of trivial bundles.

\color{black}

\subsubsection{Left action momentum map} 

The left action of
$\mathcal{A}ut(M\x\O)\simeq\operatorname{Diff}(M)\,\circledS\,\mathcal{F}(M,\mathcal{O})\ni ( \varphi , a)$
on $Q_{KK}\ni ( \mathbf{Q} , \gamma )$ is defined by
\begin{equation}\label{left_action_trivial}
(\ph,a)(\mathbf{Q},\ga):=(\ph\circ\mathbf{Q},(a\circ\mathbf{Q})\ga).
\end{equation}
Given a Lie algebra element $(\mathbf{u},{\nu})\in
\mathfrak{X}(M)\,\circledS\,\mathcal{F}(M,\mathfrak{o})$, the
infinitesimal generator associated to the left action
\eqref{left_action_trivial} reads
$(\mathbf{u},{\nu})_{Q_{KK}}(\mathbf{Q},\ga)=(\mathbf{u}\circ\mathbf{Q},({\nu}\circ\mathbf{Q})\ga)$.
By applying formula \eqref{ctgm} with $\mathfrak{g}  =\mathfrak{X}(M)\,\circledS\,\mathcal{F}(M,\mathfrak{o}) $, we get the expression
\[
\langle\J_L(\PP_\QQ,\ka_\ga),(\mathbf{u},\nu)\rangle=\langle\PP_\QQ,\mathbf{u}\o\QQ\rangle+
\langle\ka_\ga,(\nu\o\QQ)\ga\rangle,
\]
so that the momentum map $\mathbf{J}_L:T^*Q_{KK}\rightarrow\mathfrak{X}(M)^*\times\mathcal{F}(M,\mathfrak{o})^*$ reads
\begin{equation} \label{left_momap_trivial}
\mathbf{J}_L\left(\mathbf{P}_\mathbf{Q},\kappa_\ga\right)=
\left(\int_S\mathbf{P}_\mathbf{Q}\delta(x-\mathbf{Q})\mu _S ,\int_S\kappa_{\ga}\ga^{-1}\delta(x-\mathbf{Q})\mu _S \right),
\end{equation} 
where $\mathfrak{X}(M)^*$ and $\mathcal{F}(M,\mathfrak{o})^*$ denote the distributional dual spaces.


\subsubsection{Right action momentum map}

The right action of
$\mathcal{A}ut(S\x\O)=\operatorname{Diff}(S)\,\circledS\,\mathcal{F}(S,\mathcal{O})\ni ( \psi , b)$
on $Q_{KK}\ni ( \mathbf{Q} , \gamma )$ is defined by
\begin{equation}\label{right_action_trivial}
(\mathbf{Q},\ga)(\ps,b):=(\mathbf{Q}\circ\ps,(\ga\circ\ps)b).
\end{equation}
Given a Lie algebra element $(\mathbf{v},\ze)\in\X(S)\,\circledS\,\F(S,\ou)$, the infinitesimal generator reads
$(\mathbf{v},\zeta)_{Q_{KK}}(\mathbf{Q},\ga)=(T\mathbf{Q}\!\cdot\!\mathbf{v},T\ga\!\cdot\!\mathbf{v}+\ga\zeta)$,
where $ \gamma \zeta \in T_ \gamma \mathcal{F} (S, \mathcal{O} )$ denotes the left translation of $ \zeta $ by $ \gamma $.
By applying the cotangent momentum map formula \eqref{ctgm} to $\g=\X(S)\,\circledS\,\F(S,\ou)$, we get
\[
\langle\J_R(\PP_\QQ,\ka_\ga),(\mathbf{v},\ze)\rangle=\langle\PP_\QQ,T\QQ \cdot \mathbf{v}\rangle
+\langle\ka_\ga,T\ga\cdot\mathbf{v}+\ga\ze\rangle,
\]
so that the momentum map $\mathbf{J}_R:T^*Q_{KK}\rightarrow\mathfrak{X}(S)^*\times\mathcal{F}(S,\mathfrak{o})^* $ reads
\begin{equation}\label{momap_right_trivial}
\mathbf{J}_R\left(\mathbf{P}_\mathbf{Q},\kappa_\ga\right)
=\left(\mathbf{P}_\mathbf{Q}\!\cdot\!T\mathbf{Q}+\kappa_\ga\!\cdot\!T\ga,\ga^{-1}\kappa_\ga\right),
\end{equation}
where we note that, as opposed to $ \mathbf{J} _L $ in \eqref{left_momap_trivial}, $ \mathbf{J} _R $ takes values in the regular dual.
 
\begin{remark}\normalfont
Both momentum maps are equivariant, since they are momentum maps for cotangent lifted actions.
In particular they are \textcolor{black}{formally} Poisson maps for the Lie-Poisson bracket on  Lie algebra duals. \textcolor{black}{Note that the definition of Poisson brackets leads to several difficulties in the infinite dimensional case, this is why the above property only holds at a formal level.} In the next section we will show that these momentum maps form a dual pair. \textcolor{black}{Even if the Poisson properties only hold at a formal level, the dual pair property can be rigorously verified, as in \cite{GBVi2011}.}
\end{remark}


\section{The dual pair property of the EPAut momentum maps}\label{sec_3}

As shown in \cite{GBVi2011}, the EPDiff momentum maps \eqref{EPDiff_dualpair} introduced in \cite{{HoMa2004}} form a dual pair when restricted to the open subset $T^*\Emb(S,M)^\times $ 
of $T^*\Emb(S,M)$, consisting of one-forms on $M$ along $S$ 
which are everywhere non-zero on $S$, \ie $\PP_\QQ(x)\ne 0$ for all $x\in S$.
Note that $T^*\Emb(S,M)^\times $ is invariant under the actions of both $\Diff(S)$ and $\Diff(M)$.

The pair of EPAut momentum maps \eqref{EPAut_dualpair} was introduced in \cite{{GBTrVi2013}}.
We shall now show the dual pair property for these EPAut momentum maps  in the case of trivial bundles, that is, with momentum maps given by \eqref{left_momap_trivial} and \eqref{momap_right_trivial}.

\subsection{Cotangent lifted actions}

Let $T^*Q_{KK}^\times$ denote the open subset of
$T^*Q_{KK}$ consisting of pairs $(\PP_\QQ,\ka_\ga)$ with the property that for any $x\in S$, the pair
$(\PP_\QQ(x),\ka_\ga(x))\in T^*_{\QQ(x)}M\x T^*_{\ga(x)}\O$ is non-zero.
Note again that $T^*Q_{KK}^\times$ is invariant under the actions of both $\Aut(S\x\O)$ and $\Aut(M\x\O)$. 

We compute below the explicit form of the cotangent lifted actions of the automorphism groups on $T^*Q_{KK}^\times$, that we will need in order to prove the dual pair property.

\paragraph{Left cotangent action.}
The formula for the cotangent lifted action of $\Aut(M\x\O)=\Diff(M)\,\circledS\, \F(M,\O)$ on $T^*Q_{KK}$
is\begin{equation}\label{left_cotan}
(\ph,a)\cdot(\PP_\QQ,\ka_\ga)=\left(T^* \varphi ^{-1} \cdot \left(\PP_\QQ
-\langle\ka_\ga\ga^{-1},\de^la\o\QQ\rangle\right),
(a\o \QQ)\ka_\ga\right),
\end{equation}
where $\de^l a=a^{-1}\dd a\in \Omega ^1  (M,\mathfrak{o} )$ denotes the left logarithmic derivative of $a\in\F(M,\O)$, 
and therefore $\de^l a\o\QQ$ is a section of the vector bundle  $\QQ^*(T^*M)\otimes\mathfrak{o}\to S$. Indeed, using  the expression of the tangent lift of the  action \eqref{left_action_trivial} given by
\[
(\ph,a)\cdot(\mathbf{V} _\QQ,v_\ga)=\left(T\ph \cdot \mathbf{V} _\QQ,(a\o\QQ)v_\ga+(Ta \cdot \mathbf{V} _\QQ)\ga\right),
\]
one computes the cotangent lifted action \eqref{left_cotan} as follows. 
Using the expression of the inverse in the semidirect product
$(\ph,a)^{-1}=(\ph^{-1},a^{-1}\o\ph^{-1})$,
we have
\begin{align*}
\langle(\ph,a)&\cdot(\PP_\QQ,\ka_\ga),(V_{\ph\o\QQ},v_{(a\o\QQ)\ga})\rangle
=\left\langle (\PP_\QQ,\ka_\ga),(\ph^{-1},a^{-1}\o\ph^{-1})\cdot(V_{\ph\o\QQ},v_{(a\o\QQ)\ga})\right\rangle \\
&=\left\langle \PP_\QQ,T\ph^{-1} \cdot V_{\ph\o\QQ}\right\rangle 
+\left\langle \ka_\ga,(a^{-1}\o\QQ)v_{(a\o\QQ)\ga}+(T(a^{-1}\o\ph^{-1}) \cdot V_{\ph\o\QQ})(a\o\QQ)\ga\right\rangle \\
&=\left\langle \PP_\QQ,T\ph^{-1} \cdot V_{\ph\o\QQ}\right\rangle 
+\left\langle \ka_\ga\ga^{-1},(Ta^{-1}(T\ph^{-1}\cdot V_{\ph\o\QQ}))(a\o\QQ),\right\rangle 
+\left\langle (a\o\QQ)\ka_\ga,v_{(a\o\QQ)\ga}\right\rangle \\
&=\left\langle T^* \varphi ^{-1} \cdot 
\left(\PP_\QQ
+\langle\ka_\ga\ga^{-1},\de^r (a^{-1})\o\QQ\rangle\right)
,V_{\ph\o\QQ}\right\rangle 
+\left\langle (a\o\QQ)\ka_\ga,v_{(a\o\QQ)\ga}\right\rangle\\
&=\left\langle T^* \varphi ^{-1} \cdot 
\left(\PP_\QQ
-\langle\ka_\ga\ga^{-1},\de^l a\o\QQ\rangle\right)
,V_{\ph\o\QQ}\right\rangle 
+\left\langle (a\o\QQ)\ka_\ga,v_{(a\o\QQ)\ga}\right\rangle,
\end{align*}
which shows \eqref{left_cotan}.
We used in the last step the relation  
$\de^r(a^{-1})=-\de^l a$ between left and right logarithmic derivatives,
where the right logarithmic derivative is defined by 
$\de^r a=(\dd a) a^{-1}\in \Omega ^1  (M,\mathfrak{o} )$.

\paragraph{Right cotangent action.} The formula for the cotangent lifted action of
$\Aut(S\x\O)=\Diff(S)\,\circledS\, \F(S,\O)$ on $T^*Q_{KK}$ reads
\begin{equation}\label{right_cotangent}
(\PP_\QQ,\ka_\ga)\cdot(\ps,b)
=\left(  (\PP_\QQ\circ \psi )\Jac_ \psi,(\ka_\ga\circ \psi)b\Jac_\ps\right),
\end{equation}
where $\Jac_ \psi $ denotes the Jacobian of $ \psi $ relative to the volume form $ \mu _S $, i.e. $ \psi ^\ast \mu _S = \Jac_ \psi \mu _S $.
Indeed, from the expression of the tangent lift of the action \eqref{right_action_trivial} given by
\[
(V_\QQ,v_\ga)\cdot (\ps,b)=(V_\QQ\o\ps,(v_\ga\o\ps)b)
\]
one computes the cotangent lifted action \eqref{right_cotangent} as follows.
\begin{align*}
\langle(\PP_\QQ,\ka_\ga)\cdot(\ps,b),(V_{\QQ\o\ps},v_{(\ga\o\ps)b})\rangle
&=\langle(\PP_\QQ,\ka_\ga),(V_{\QQ\o\ps},v_{(\ga\o\ps)b})\cdot(\ps^{-1},b^{-1}\o\ps^{-1})\rangle\\
&=\langle\PP_\QQ,V_{\QQ\o\ps}\o\ps^{-1}\rangle
+\langle\ka_\ga,(v_{(\ga\o\ps)b}b^{-1})\o\ps^{-1})\rangle\\
&=\langle(\PP_\QQ\o\ps)\Jac_\ps,V_{\QQ\o\ps}\rangle
+\langle(\ka_\ga\o\ps)b\Jac_\ps ,v_{(\ga\o\ps)b}\rangle,
\end{align*}
which shows \eqref{right_cotangent}. 

\subsection{Transitivity results}

In order to prove the dual pair property, we shall show the stronger result 
that the actions of $\Aut(M\x\O)$ and $\Aut(S\x\O)$ on $T^*Q_{KK}^\times$ are mutually completely orthogonal. 

We will use the following lemma (Lemma 4.2 in \cite{GBVi2011}), detached from the proof of Proposition 3 in \cite{HaVi2004}.

\begin{lemma}\label{function}
Let $(M,g)$ be a Riemannian manifold and $N\subset M$ a submanifold. Let $E$ be the normal bundle $TN^\perp$ 
viewed as a tubular neighborhood of $N$ in $M$,
with the identification done by the
Riemannian exponential map. 
Then any section $\la\in\Ga(T^*M|_N)$ vanishing on $TN$,
when restricted to $TN^\perp$, 
defines a smooth function $h\in\F(E)$,
linear on each fiber,
whose differential along $N$ satisfies $(\dd h)|_N=\la$
$($as sections of $T^*M|_{N}$$)$.
\end{lemma}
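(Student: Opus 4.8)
The plan is to exhibit the function $h$ explicitly and then verify the differential condition by splitting the tangent space to $E$ along the zero section. Write $\pi:E\to N$ for the bundle projection, and recall that over a point $p\in N$ the fiber is $E_p=T_pN^\perp\subset T_pM$. Since a conormal covector $\la_p\in T^*_pM$ pairs naturally with normal vectors, I would define
\[
h(e):=\langle\la_{\pi(e)},e\rangle,\qquad e\in E,
\]
where $e\in T_pN^\perp$ is viewed inside $T_pM$ and paired with $\la_p$. This $h$ is smooth, because $\la$ is a smooth section and the pairing is bilinear and smooth, and it is linear on each fiber $E_p$ by construction. It then remains only to check that $(\dd h)|_N=\la$.

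For this I would first recall the canonical identification of the tangent space to $E$ along its zero section: at $p\in N\subset E$ one has $T_pE\cong T_pN\oplus E_p=T_pN\oplus T_pN^\perp=T_pM$, the first summand tangent to the zero section and the second the fiber direction. Under the tubular neighborhood identification by the Riemannian exponential map, the tangent map of $\exp$ at the zero section realizes exactly this isomorphism, and this is the identification through which $(\dd h)_p$ is regarded as an element of $T^*_pM$. Computing $(\dd h)_p$ on each summand is then immediate: for $v\in T_pN$ the restriction of $h$ to the zero section vanishes identically (as $h(0_p)=0$), so $(\dd h)_p(v)=0$, matching $\la_p(v)=0$; while for $w\in T_pN^\perp$ the restriction of $h$ to the fiber $E_p$ is the linear map $e\mapsto\langle\la_p,e\rangle$, whose derivative at the origin is $\langle\la_p,w\rangle$, so $(\dd h)_p(w)=\la_p(w)$. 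Summing over the splitting $T_pM=T_pN\oplus T_pN^\perp$ yields $(\dd h)_p=\la_p$ for every $p\in N$.

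The only genuine subtlety, and the step I would take most care with, is ensuring that the identification $T_pE\cong T_pM$ used to read off $(\dd h)_p$ as a covector on $M$ is precisely the canonical splitting realized by $T\exp$ at the zero section. This is also exactly where the hypothesis that $\la$ vanishes on $TN$ enters: the function $h$ is built by pairing only with normal vectors, so it is blind to any tangential component of $\la_p$, and the hypothesis guarantees that this tangential component is already zero. Hence nothing is lost, and $(\dd h)|_N$ reproduces all of $\la$ rather than merely its normal part.
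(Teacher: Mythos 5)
Your proof is correct. The paper does not actually reproduce a proof of this lemma---it imports it as Lemma 4.2 of \cite{GBVi2011}, detached from the proof of Proposition 3 in \cite{HaVi2004}---but your argument (take $h(e)=\langle\la_{\pi(e)},e\rangle$, note it is smooth and fiberwise linear, and compute $(\dd h)_p$ on the two summands of the canonical splitting $T_{0_p}E\cong T_pN\oplus T_pN^\perp\cong T_pM$ realized by $T\exp$ along the zero section, using $\la|_{TN}=0$ to match the tangential part) is exactly the standard argument behind that reference and is complete.
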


The first transitivity result is shown in the following proposition.

\begin{proposition}\label{transitivity1} 
The group $\Aut(M\x\O)$ acts infinitesimally transitively on the
level sets of the momentum map $\mathbf{J}_R$ given in \eqref{momap_right_trivial} restricted to $T^*Q_{KK}^\x$.
\end{proposition}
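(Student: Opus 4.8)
**

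The plan is to show that for any fixed $(\PP_\QQ,\ka_\ga)\in T^*Q_{KK}^\times$, the tangent space to the $\J_R$-level set through that point is contained in the span of the infinitesimal generators of the $\Aut(M\x\O)$-action. Since we already know the reverse inclusion $\g_M\subset\ker T\J_R$ holds automatically (the two momentum maps commute, so $\mathfrak{aut}(M\x\O)_M\subset\ker T\J_R$), the real content is the surjectivity-type inclusion $\ker T\J_R\subset\mathfrak{aut}(M\x\O)_M$. Concretely, I would take an arbitrary tangent vector $(\dot\PP,\dot\ka)\in T_{(\PP_\QQ,\ka_\ga)}(T^*Q_{KK}^\times)$ lying in $\ker T\J_R$ and exhibit a Lie algebra element $(\mathbf{u},\nu)\in\mathfrak{X}(M)\,\circledS\,\mathcal{F}(M,\mathfrak{o})$ whose infinitesimal generator, together with its cotangent lift computed from \eqref{left_cotan}, reproduces $(\dot\PP,\dot\ka)$ at the chosen point.

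First I would compute $T\J_R$ explicitly by differentiating \eqref{momap_right_trivial}, and translate the condition $(\dot\PP,\dot\ka)\in\ker T\J_R$ into pointwise differential constraints along $S$. The key structural fact, inherited from the EPDiff case of \cite{GBVi2011}, is that being in $\ker T\J_R$ forces the variation to be ``normal'' to the orbit directions generated by $\Diff(S)\,\circledS\,\F(S,\O)$: the first component of $\J_R$ controls the tangential part $\PP_\QQ\cdot T\QQ$ and the second component $\ga^{-1}\ka_\ga$ controls the $\mathcal{O}$-fiber part, so that vanishing of $T\J_R$ on $(\dot\PP,\dot\ka)$ says precisely that the variation annihilates all reparametrization and gauge directions. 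This is exactly the setting where Lemma \ref{function} applies: the embedded image $\QQ(S)\subset M$ together with the non-vanishing covector data defines a section of $T^*M$ along $N=\QQ(S)$ vanishing on $TN$, which extends to a fiberwise-linear function on a tubular neighborhood whose differential recovers the prescribed covector along $N$.

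The heart of the construction is to build $\mathbf{u}$ and $\nu$ as functions on $M$ (not merely along $\QQ(S)$) realizing the desired infinitesimal left action. I would use the non-degeneracy condition defining $T^*Q_{KK}^\times$—namely that $(\PP_\QQ(x),\ka_\ga(x))\neq 0$ for every $x\in S$—to guarantee that at each point of the embedded submanifold there is at least one nonzero covector direction to ``push against,'' so that prescribing the value of the generator's cotangent lift does not over-determine $(\mathbf{u},\nu)$. Applying Lemma \ref{function} in both the $M$-factor (for $\mathbf{u}$, matching the base-manifold covector $\PP_\QQ$) and the $\mathfrak{o}$-valued factor (for $\nu$, matching $\ka_\ga\ga^{-1}$ via the logarithmic-derivative term appearing in \eqref{left_cotan}) produces smooth extensions off $\QQ(S)$; the compactness and embedding hypotheses ensure the tubular neighborhood and the partition-of-unity extension to all of $M$ are available. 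Matching the cotangent-lifted generator from \eqref{left_cotan} to $(\dot\PP,\dot\ka)$ then reduces to verifying that the differentials of the extended functions reproduce the prescribed first-order data along $N$, which is exactly what Lemma \ref{function} delivers.

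The main obstacle I anticipate is the coupling between the two components in the left cotangent action \eqref{left_cotan}: the $\PP$-variation is not purely driven by $\mathbf{u}$ but also receives the contribution $-\langle\ka_\ga\ga^{-1},\de^l a\o\QQ\rangle$ from the $\mathcal{O}$-valued generator $\nu$. Thus one cannot solve for $\mathbf{u}$ and $\nu$ independently; I would have to construct $\nu$ first (matching the fiber data $\dot\ka$ through the factor $(a\o\QQ)\ka_\ga$, equivalently its infinitesimal form $(\nu\o\QQ)\ka_\ga$), then feed the resulting correction term into the equation for $\mathbf{u}$ and solve that afterward. The delicate point is ensuring that the extension in the $M$-direction remains smooth and well-defined where $\PP_\QQ(x)$ may vanish while $\ka_\ga(x)\neq 0$ (permitted by the $T^*Q_{KK}^\times$ condition), so the argument must treat the generic and degenerate directions uniformly; this is where the precise formulation of Lemma \ref{function}, applied to the combined bundle data, does the essential work.
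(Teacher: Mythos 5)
Your overall framing is right---reduce to showing that the infinitesimal generators fill out $\ker T\J_R$, use the transitivity of $\Aut(M\x\O)$ on $Q_{KK}$ to reduce to the isotropy algebra acting on the cotangent fibre over $(\QQ,\ga)$, and invoke Lemma \ref{function} to extend covector data off $N=\QQ(S)$---and you correctly flag the coupling in \eqref{left_cotan} and the points where $\PP_\QQ$ vanishes as the crux. But your resolution of that crux is where the argument breaks. After reducing to the fibre over $(\QQ,\ga)$, the level-set condition forces $\dot\ka=0$, so matching the second component only tells you $\nu|_N=0$; the term that actually enters the equation for $\dot\PP$ is $\langle\si,(\dd\nu)|_N\rangle$, which depends on the \emph{normal} derivative of $\nu$ along $N$ and is not determined by $\nu|_N$ at all. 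Hence ``construct $\nu$ first by matching $\dot\ka$, then solve for $\mathbf{u}$'' gets the dependency backwards: at a point $x$ with $\PP_\QQ(x)=0$ the contribution $-\PP\o(\nabla\mathbf{u})|_N$ of any $\mathbf{u}$ in the isotropy algebra vanishes at $x$, so the required conormal variation $P'(x)$ can only be produced by $-\langle\si,(\dd\nu)|_N\rangle$; the normal derivative of $\nu$ must therefore be chosen as a function of the target $P'$ (equivalently $\dot\PP$), not of $\dot\ka$. Your plan leaves that degree of freedom unspecified, and any choice made without reference to $\dot\PP$ leaves the subsequent $\mathbf{u}$-equation unsolvable wherever $\PP_\QQ$ vanishes.

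The missing idea is a \emph{simultaneous}, weighted construction of $(\mathbf{u},\nu)$ from a single function. The paper sets $\la:=P'/(\|\PP\|^2+\|\si\|^2)$ (well defined on $N$ precisely because of the $T^*Q_{KK}^\x$ condition), applies Lemma \ref{function} once to obtain one fibrewise-linear function $f$ with $(\dd f)|_N=\la$ and $f|_N=0$, and then puts $\mathbf{u}:=-f\widetilde{\PP^\sharp}$ and $\nu:=-f\widetilde{\si^\sharp}$. The two resulting contributions are $\frac{\|\PP\|^2}{\|\PP\|^2+\|\si\|^2}P'$ and $\frac{\|\si\|^2}{\|\PP\|^2+\|\si\|^2}P'$, which sum to $P'$ everywhere, treating uniformly the loci where either $\PP$ or $\si$ degenerates. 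In particular Lemma \ref{function} is applied once, to the normalized target $\la$, rather than once ``for $\mathbf{u}$'' to match $\PP_\QQ$ and once ``for $\nu$'' to match $\ka_\ga\ga^{-1}$ as you propose.
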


\begin{proof} The left action $(\ph,a)(\mathbf{Q},\ga)=(\ph\circ\mathbf{Q},(a\circ\mathbf{Q})\ga)$ 
of $\Aut(M\x\O)$ on $Q_{KK}$ is transitive on connected components.
This can be seen as follows. Given $(\QQ_1,\ga_1)$ and $(\QQ_2,\ga_2)$ in the same
connected component of $Q_{KK}$, by the transitivity of the action of $\Diff(M)$ 
on connected components of $\Emb(S,M)$ \cite{Hi1976}, we find 
a diffeomorphism $\ph$ such that $\QQ_2=\ph\o\QQ_1$.
By a standard argument using a partition of unity,
it is possible to extend $\ga_2\o\ga_1^{-1}\in\F(S,\O)$ in a smooth way to 
a smooth function $a\in\F(M,\O)$ along the embedding $\QQ_1$.

Consider two pairs $(\PP_\QQ,\ka_\ga)$ and $(\PP'_{\QQ'},\ka'_{\ga'})$
in the same level set of $\J_R$,
so that
\begin{equation}\label{leve}
\PP_\QQ\cdot T\QQ+\ka_\ga\cdot T\ga=\PP'_{\QQ'}\cdot T\QQ'+\ka'_{\ga'}\cdot T\ga'\quad\text{and}\quad 
\ga^{-1}\ka_\ga=(\ga')^{-1}\ka'_{\ga'}.
\end{equation}
The left action of $\Aut(M\x\O)$ being transitive on connected components of $Q_{KK}$,
we can focus on the action of the isotropy group of $(\QQ,\ga)$ on the
cotangent fiber over $(\QQ,\ga)$. 
Hence we assume that both pairs have the same foot point $(\QQ,\ga)$, so that the identities above become
\[
\PP_\QQ\cdot T\QQ=\PP'_{\QQ}\cdot T\QQ\quad\text{and}\quad 
\ka'_\ga=\ka_\ga.
\]
The two objects that characterize the $\J_R$ level set of $(\PP_\QQ,\ka_\ga)$ over $(\QQ,\ga)$
are 
\[\al:=\PP_\QQ\cdot T\QQ\in\Om^1(S)\quad\text{ and }\quad
\si:=\ka_\ga\ga^{-1}\in\F(S,\mathfrak{o}^*).
\] 
Because the embedding $\QQ$ is fixed, we can move the objects to  the submanifold $N=\QQ(S)$ of $M$.
We will denote by the same letters their push-forward by the diffeomorphism 
$\QQ:S\to N$,
namely $\al\in\Om^1(N)$ and $\si\in\F(N,\mathfrak{o}^*)$. 
We also identify $T^*_\QQ\Emb(S,M)=\Ga(\QQ^*T^*M)$ with $\Ga(T^*M|_N)$,
via the diffeomorphism $\QQ$.
We consider the affine subspace
\[
\Ga_\al(T^*M|_N):=\{\PP\in\Ga(T^*M|_N) : \PP|_{TN}=\al\}\subset \Ga(T^*M|_N),
\]
whose directing linear subspace is given by the space of sections of the conormal bundle to the submanifold $N$ of $M$
\[
\Ga_0(T^*M|_N)=\{\PP\in\Ga(T^*M|_N):\PP|_{TN}=0\}.
\]
We define an open subset of $\Ga_\al(T^*M|_N)$ by
\[
\Ga_\al(T^*M|_N)^\si:=\{\PP\in\Ga(T^*M|_N)\,:\,\PP|_{TN}=\al
\text{ and }\PP(x)\ne 0,\forall \,x\in \si^{-1}(0)\}.
\]

The isotropy group of $(\QQ,\ga)$ is a semidirect product group,
\begin{equation}\label{group_iso}
\Aut(M\x\O)_{(\QQ,\ga)}=\Diff_N(M)\,\circledS\, \F_{N}(M,\O),
\end{equation}
where $\Diff_N(M)$ denotes the subgroup of diffeomorphisms of $M$ that fix 
the submanifold $N=\QQ(S)$ of $M$ pointwise  
and $\F_{N}(M,\O):=\{a:M\to \O\,|\,a(x)=e,\forall x\in N\}$.
Its isotropy Lie algebra of $(\QQ,\ga)$ is the semidirect product 
$\X_N(M)\,\circledS\, \F_N(M,\mathfrak{o})$,
where $\X_N(M)$ denotes the Lie algebra of vector fields on $M$ that vanish on $N$,
the Lie algebra of $\Diff_N(M)$, and  $\F_{N}(M,\mathfrak{o}):=\{\nu:M\to \mathfrak{o}\mid\nu(x)=0,\forall \,x\in N\}$, the Lie algebra of $\F_N(M,\O)$.

The isotropy group acts by \eqref{left_cotan} on the cotangent fiber over $(\QQ,\ga)$, 
so it preserves the second component
\begin{equation}\label{isot}
(\ph,a)\cdot(\PP_\QQ,\ka_\ga)=\left(T^* \varphi ^{-1} \cdot \left(\PP_\QQ
-\langle\ka_\ga\ga^{-1},\de^la\o\QQ\rangle\right).
\ka_\ga\right).
\end{equation}
Its restriction 
to the level sets of $\J_R$ can be seen an action on  $\Ga_\al(T^*M|_N)^\si$:
\begin{equation}\label{isotrop_action}
(\ph,a)\cdot\PP=\left(\PP
-\langle\si,(\de^la)|_N\rangle\right)\o(T\ph^{-1})|_N.
\end{equation}
We notice that $(\de^la)|_{TN}=\de^l(a|_N)=\de^le=0$ for all $a\in\F_N(M,\O)$
and $T\ph^{-1}|_{TN}=\id_{TN}$ for all $\ph\in\Diff_N(M)$,
so we have indeed an action on $\Ga_\al(T^*M|_N)^\si$.

The infinitesimal action of the isotropy Lie algebra $\X_N(M)\,\circledS\, \F_N(M,\mathfrak{o})$ on $\Ga_\al(T^*M|_N)^\si$ 
is obtained by differentiating the action \eqref{isotrop_action}:
\begin{equation}\label{infi}
(\mathbf{u},\nu)_{\Ga_\al(T^*M|_N)^\si}(\PP)=
-\PP\o(\nabla\mathbf{u})|_N-\langle\si,(\dd\nu)|_N\rangle,
\end{equation}
where we used the fact that the differential at the identity of the logarithmic derivative map
$\de^l:\F(M,\O)\to\Om^1(M,\mathfrak{o})$ is
$\dd:\F(M,\mathfrak{o})\to\Om^1(M,\mathfrak{o})$.

The transitivity result we have to prove can now be rephrased as infinitesimal transitivity of the action \eqref{infi}.
This means to show that, given $\PP\in\Ga_\al(T^*M|_N)^\si$, 
for every $P'\in\Ga_0(T^*M|_N)$ there exists $(\mathbf{u},\nu)\in\X_N(M)\,\circledS\, \F_N(M,\mathfrak{o})$ such that 
\begin{equation}\label{prim}
P'=-\PP\o(\nabla\mathbf{u})|_N-\langle\si,(\dd\nu)|_N\rangle.
\end{equation}
For this we proceed as in \cite{GBVi2011}.
We fix a Riemannian metric on $M$ and denote by $ \nabla $, $\sharp$, and $\|\,\|$ the Levi-Civita covariant derivative, the sharp operator, and the norm associated to the Riemannian metric, respectively.
We also fix an inner product on the (finite dimensional) Lie algebra $\mathfrak{o}$
with induced norm $\|\,\|$ (both on $\mathfrak{o}$ and on $\mathfrak{o}^*$)
and induced isomorphism $\sharp:\mathfrak{o}^*\to\mathfrak{o}$.

Since $\PP$ and $\si$ cannot vanish simultaneously, the function
$\Vert \PP\Vert^2+\Vert\si\Vert^2$ is non-zero everywhere on $N$, so
we can consider the section 
\[
\la:=\frac{P'}{\|\PP\|^2+\|\si\|^2}\in\Ga_0(T^*M|_N).
\]
Lemma \ref{function} can be applied to the restriction of $\la$ to the normal bundle over the submanifold $N\subset M$ 
(viewed as a tubular neighborhood of $N$)
because the restriction of $\la$ to $TN$ vanishes.
We thus obtain from $\la$ a function $f$ on the normal bundle, linear on each fiber, with the property that the differential of $f$ along $N$ is $\la$,
\ie $(\dd f)|_{N}=\la$ as sections of $T^*M|_{N}$.
Using the tubular neighborhood of $N$ we build a smooth
function on $M$, identical to $f$ on a neighborhood of $N$, also denoted by $f$.
In particular $f$ vanishes on $N$ since $\la$ 
clearly vanishes on the zero section of $T^*M|_N$. 

From this function $f$ we define the pair $( \mathbf{u} , \nu ) \in \mathfrak{X}  _N (M) \times \mathcal{F} _N( M, \mathfrak{o} )$ by $\mathbf{u}:=-f\widetilde{\PP^\sharp}$ and $\nu:=-f\widetilde{\si^\sharp}$, where $\widetilde{\PP^\sharp}\in\X(M)$ is an arbitrary smooth extension of $\PP^\sharp:N\to TM$ and $\widetilde{\si^\sharp}\in\F(M,\mathfrak{o})$ is an arbitrary smooth extension of $\si^\sharp:N\to\mathfrak{o}$,
both obtained by a standard argument using a partition of unity.

The computation $-(\nabla \mathbf{u})|_N=(f\nabla \widetilde{\PP^\sharp})|_N+((\dd f) \widetilde{\PP^\sharp})|_N=(\dd f)|_N\PP^\sharp=\la \PP^\sharp$
implies that 
\[
-\PP\o (\nabla \mathbf{u})|_N=\PP\o(\la \PP^\sharp)
=\frac{\| \PP\|^2}{\|\PP\|^2+\|\si\|^2}P'.
\]
On the other hand $-(\dd\nu)|_N=(f\dd \widetilde{\si^\sharp})|_N+((\dd f) \widetilde{\si^\sharp})|_N
=(\dd f)|_N\si^\sharp=\la\si^\sharp$
implies that
\[
-\langle\si,(\dd\nu)|_N\rangle=\langle\si,\la\si^\sharp\rangle
=\frac{\| \si\|^2}{\|\PP\|^2+\|\si\|^2}P'.
\]
By adding them we get \eqref{prim},
which ensures the infinitesimal transitivity of the action \eqref{infi} of
$\X_N(M)\,\circledS\, \F_N(M,\mathfrak{o})$ on $\Ga_\al(T^*M|_N)^\si $.
\end{proof}

\medskip

The next Proposition is the analogue of Proposition \ref{transitivity1} but for the right action and the left momentum map.

\begin{proposition}
The group $\Aut(S\x\O)$ acts transitively on level sets of the momentum map
$\mathbf{J}_L$
given in \eqref{left_momap_trivial} restricted to $T^*Q_{KK}^\x$.
\end{proposition}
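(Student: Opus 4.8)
The plan is to mirror the structure of the proof of Proposition~\ref{transitivity1}, but now working over the target manifold $M$ rather than the source $S$, and exploiting the fact that $\J_R$ takes values in the regular dual whereas $\J_L$ lands in the distributional dual. First I would reduce to a fixed foot point: the right action of $\Aut(S\x\O)=\Diff(S)\,\circledS\,\F(S,\O)$ on $Q_{KK}$ is transitive on connected components, which follows exactly as before from the transitivity of $\Diff(S)$ on connected components of $\Emb(S,M)$ together with a partition-of-unity extension argument to realize any target value of $\ga$. Using this transitivity, I reduce the problem to the action of the isotropy group of a base point $(\QQ,\ga)$ on the cotangent fiber, restricted to a level set of $\J_L$.

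Next I would identify the invariants that parametrize a $\J_L$ level set over a fixed $(\QQ,\ga)$. By \eqref{left_momap_trivial}, the two level-set conditions fix the $S$-densities $\PP_\QQ\,\mu_S$ (as a measure along $\QQ$) and $\ka_\ga\ga^{-1}\,\mu_S$; since the foot point is fixed and the delta pushes forward to $\QQ(S)$, this means two pairs in the same level set over $(\QQ,\ga)$ satisfy $\PP_\QQ=\PP'_\QQ$ and $\ka_\ga\ga^{-1}=\ka'_\ga(\ga)^{-1}$ pointwise, \emph{modulo} the fibers of $\QQ$ where $\QQ$ fails to be injective — but $\QQ$ is an embedding, so these are genuine pointwise equalities on $S$. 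The isotropy group here is $\Aut(S\x\O)_{(\QQ,\ga)}=\Diff(S)_\ga\,\circledS\,\F(S,\O)_e$ coming from \eqref{right_cotangent}; I would differentiate the restricted action to obtain the infinitesimal generator in terms of $(\mathbf{v},\ze)\in\X(S)\,\circledS\,\F(S,\ou)$ acting on $\PP_\QQ$ and $\ka_\ga$, the $S$-side analogue of \eqref{infi}.

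The crux is then an infinitesimal-transitivity statement on the source: given $(\PP_\QQ,\ka_\ga)\in T^*Q_{KK}^\x$, every allowed variation tangent to the $\J_L$ level set must be realized by some isotropy Lie algebra element. I would construct the generating vector field $\mathbf{v}$ and function $\ze$ by the same normalization device as in Proposition~\ref{transitivity1}: because $(\PP_\QQ(x),\ka_\ga(x))$ is nowhere zero on $S$ (this is exactly where $T^*Q_{KK}^\x$ enters), the function $\|\PP_\QQ\|^2+\|\ka_\ga\ga^{-1}\|^2$ is nonvanishing on $S$, and I divide the prescribed variation by it to split the required displacement into a piece absorbed by $\mathbf{v}$ (acting through $\Jac_\ps$, \ie the divergence of $\mathbf{v}$) and a piece absorbed by $\ze$. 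The main obstacle I anticipate is that on the source side the relevant differential operator is divergence rather than covariant derivative, so the transitivity must be phrased as surjectivity of a divergence-type operator onto the space of variations of an $S$-density at fixed total integral; I expect this to require the nonvanishing of the $L^2$ pairing $\int_S(\PP_\QQ\cdot T\QQ+\ka_\ga\cdot T\ga)$-type quantities and a careful invocation of Lemma~\ref{function} (or its $S$-intrinsic analogue) to produce the required smooth $\mathbf{v}$ and $\ze$, rather than the cleaner pointwise division available in Proposition~\ref{transitivity1}.
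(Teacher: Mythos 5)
Your opening reduction is where the argument breaks down: the right action of $\Aut(S\x\O)$ on $Q_{KK}$ is \emph{not} transitive on connected components. The group $\Diff(S)$ acts on $\Emb(S,M)$ by precomposition (reparametrization), so its orbits consist precisely of the embeddings with a fixed image $\QQ(S)\subset M$; it is the \emph{left} action of $\Diff(M)$ that is transitive on connected components of $\Emb(S,M)$, and that fact is only available in the proof of Proposition \ref{transitivity1}. Consequently you cannot reduce to a common foot point $(\QQ,\ga)$ by an a priori group element. The real content of the proof is to extract from the level-set condition $\J_L(\PP_\QQ,\ka_\ga)=\J_L(\PP'_{\QQ'},\ka'_{\ga'})$ the fact that $\QQ(S)=\QQ'(S)$: one tests \eqref{left_momap_trivial} against vector fields $X$ and functions $f$ supported near a putative point of $\QQ(S)\setminus\QQ'(S)$, and uses the nonvanishing condition defining $T^*Q_{KK}^\x$ (at each $x_0$ either $\PP_\QQ(x_0)\ne 0$ or $\ka_\ga(x_0)\ne 0$, so at least one of the two integral identities detects the point $\QQ(x_0)$) to reach a contradiction. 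Only then does one get $\ps\in\Diff(S)$ with $\QQ'=\QQ\o\ps$, set $b=(\ga^{-1}\o\ps)\ga'$, and check by a change of variables that $\PP'_{\QQ'}=(\PP_\QQ\o\ps)\Jac_\ps$ and $\ka'_{\ga'}=(\ka_\ga\o\ps)b\Jac_\ps$, i.e.\ that the two covectors are related by the right cotangent action \eqref{right_cotangent}.

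There is also a structural misreading in the second half of your plan. As you yourself note, once the foot points agree the level-set condition forces $\PP_\QQ=\PP'_\QQ$ and $\ka_\ga=\ka'_\ga$ pointwise, so the level set over a fixed $(\QQ,\ga)$ is a single point and there is no residual ``infinitesimal transitivity on the fiber'' left to establish; the proposed normalization device, the divergence-type surjectivity, and the invocation of Lemma \ref{function} have no role here. The statement is genuine group transitivity, proved by exhibiting $(\ps,b)$ explicitly, in contrast with Proposition \ref{transitivity1}, which is only infinitesimal and is where the analytic work with Lemma \ref{function} actually occurs. Note finally that the acting group is all of $\Diff(S)\,\circledS\,\F(S,\O)$ with no volume constraint, so no divergence operator enters at any stage.
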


\begin{proof}
Suppose that $(\PP_\QQ,\ka_\ga)$ and $(\PP'_{\QQ'},\ka'_{\ga'})$
lie in the same level set of $\J_L$, \ie $\J_L(\PP_\QQ,\ka_\ga)=\J_L(\PP'_{\QQ'},\ka'_{\ga'})$. We thus have
\[
\int_S\PP_\QQ \cdot (X\o \QQ)\mu _S =\int_S\PP'_{\QQ'} \cdot (X\o \QQ')\mu _S \quad\text{and}\quad \int_S\ka_\ga\ga^{-1}(f\o \QQ)\mu _S 
=\int_S\ka'_{\ga'}(\ga')^{-1}(f\o \QQ')\mu _S ,
\] 
for all $X\in\X(M)$ and all $f\in\F(M,\ou)$.

These identities ensure that the embeddings $\QQ$ and $\QQ'$ have the same image: 
$ \mathbf{Q} (S)= \mathbf{Q} '(S)$.
In order to prove this, we fix $x_0 \in S$. If $ \mathbf{P}_ \mathbf{Q} (x_0 )\neq 0$, then from the first equality we obtain that $ \mathbf{Q} (x_0 ) \in \mathbf{Q} '(S)$. Indeed, if this is not the case, then we can find $X$ with compact support 
$K= \operatorname{supp}X $ such that $K\ni \mathbf{Q} (x_0 )$ and 
$K \cap \mathbf{Q} '(S)=\varnothing$.  Such an $X$ can also be chosen such that $\int_S\PP_\QQ \cdot (X\o \QQ)\mu _S \neq 0$, whereas we always have $\int_S\PP'_{\QQ'} \cdot (X\o \QQ')\mu _S=0$. This is in contradiction with the hypothesis. If $ \mathbf{P} _ \mathbf{Q} (x_0 )=0$, 
then, by the definition of $T^*Q_{KK}^\x$, $\ka_\ga(x_0 )\ne 0$. 
Then also $\ka_\ga(x_0 )\ga(x_0 )^{-1}\ne 0$ and we use the second identity with the same argument as above, to get $ \mathbf{Q} (x_0 ) \in \mathbf{Q} '(S)$. 
Doing this for all $x_0 \in S$ proves that $ \mathbf{Q} (S) \subset \mathbf{Q} '(S)$ and, similarly, that $ \mathbf{Q}' (S) \subset \mathbf{Q}(S)$.

Since $ \mathbf{Q} (S)= \mathbf{Q} '(S)$, there exists $\ps\in\Diff(S)$
such that $\QQ'=\QQ\o\ps$. 
Plugging this into the first identity above and using a change of variables, we get
\[
\int_S\PP_\QQ \cdot (X\o \QQ)\mu _S =\int_S (\PP'_{\QQ'} \circ \psi ^{-1} ) \cdot (X\o \QQ)\Jac_{\psi ^{-1}}\mu _S, 
\]
for all $X\in\X(M)$.
We know that $\Jac_{\ps^{-1}}=\Jac_{\ps}^{-1}\o\ps^{-1}$,
so $\PP'_{\QQ'}=(\PP_\QQ \circ \psi )\Jac_\ps$, the first component of \eqref{right_cotangent}. 

Let $b=(\ga^{-1}\o\ps)\ga'\in\F(S,\O)$, so $\ga'=(\ga\o \ps)b$, which we plug into the second identity above to get
\begin{align*}
\int_S\ka_\ga\ga^{-1}(f\o \QQ)\mu _S &=\int_S\ka'_{\ga'}b^{-1}(\ga^{-1}\o\ps)(f\o \QQ\o\ps)\mu _S \\
&=\int_S\left( (\ka'_{\ga'} b^{-1})\circ \psi ^{-1} \right) \ga^{-1}(f\o \QQ)\Jac_{\ps^{-1}}\mu _S .
\end{align*}
Since $f\o \QQ$ is arbitrary in $\F(S,\ou)$, 
we get $\ka_\ga=\left((\ka'_{\ga'} b^{-1})\o\ps^{-1}\right)\Jac_{\ps^{-1}}$.
This means that $\ka'_{\ga'}=(\ka_\ga\circ \psi )b\Jac_\ps$, 
the second component of \eqref{right_cotangent}.
\end{proof}

\medskip

From  the preceding propositions we obtain that the commuting actions 
of $\Aut(M\x\O)$ and $\Aut(S\x\O)$ are mutually completely orthogonal. We thus get the following result.

\begin{theorem}
The momentum maps \eqref{left_momap_trivial} and \eqref{momap_right_trivial} associated to the EPAut equations, 
form a dual pair:

\begin{picture}(150,100)(-70,0)%
\put(65,75){$T^*\Emb_\O(S\x\O, M\x\O)^\x$}

\put(90,50){$\mathbf{J}_L$}

\put(160,50){$\mathbf{J}_R$}

\put(52,15){$\mathfrak{aut}(M\times\mathcal{O})^*$
}

\put(150,15){$\mathfrak{aut}(S\times\mathcal{O})^*$.
}

\put(130,70){\vector(-1, -1){40}}

\put(135,70){\vector(1,-1){40}}

\end{picture}
\end{theorem}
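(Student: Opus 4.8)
The plan is to read off the dual pair property directly from the two transitivity Propositions just established, via the general principle recalled in Subsection~3.2 that, in infinite dimensions, the mutual complete orthogonality conditions \eqref{mutcomort} imply the dual pair conditions \eqref{strong_dual_pair}. First I would fix the dictionary between the abstract diagram \eqref{dp} and the concrete one \eqref{EPAut_dualpair}: the weak symplectic manifold is $T^*Q_{KK}^\times$ with its canonical symplectic form, the momentum map $\J_L$ takes values in $\h^*=\mathfrak{aut}(M\x\O)^*$ and is generated by the left action of $H=\Aut(M\x\O)$, while $\J_R$ takes values in $\g^*=\mathfrak{aut}(S\x\O)^*$ and is generated by the right action of $G=\Aut(S\x\O)$. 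These two actions commute, since one acts by postcomposition on the $M$-factor and the other by precomposition on the $S$-factor, and both momentum maps are equivariant, as recorded in the Remark following \eqref{momap_right_trivial}; thus the hypotheses of the framework of Subsection~3.2 are met.

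With this identification, the two conditions in \eqref{mutcomort} become $\g_M=\ker T\J_L$ and $\h_M=\ker T\J_R$, that is, the infinitesimal transitivity of the $G=\Aut(S\x\O)$ action on the level sets of $\J_L$ and of the $H=\Aut(M\x\O)$ action on the level sets of $\J_R$. The second of these is exactly Proposition~\ref{transitivity1}, which yields $\h_M=\ker T\J_R$ on $T^*Q_{KK}^\times$. The first follows from the Proposition immediately above (transitivity of $\Aut(S\x\O)$ on the level sets of $\J_L$): its global transitivity statement implies in particular the infinitesimal transitivity needed for the tangent-space equality $\g_M=\ker T\J_L$, again on $T^*Q_{KK}^\times$. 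Both statements are proved precisely on the invariant open subset $T^*Q_{KK}^\times$, which is why the momentum maps are restricted to it.

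Hence both identities of \eqref{mutcomort} hold on $T^*Q_{KK}^\times$, so the two actions are mutually completely orthogonal, and by the implication recalled in Subsection~3.2 the pair \eqref{EPAut_dualpair} is a dual pair, establishing both equalities in \eqref{strong_dual_pair}. I would stress that this is the stronger conclusion announced in the Introduction; indeed, as noted in Subsection~3.2, either one of the two Propositions alone already forces both dual pair equalities. I do not anticipate a genuine obstacle at this last step, since all the analytic content is carried by the two Propositions; the only points requiring care are the correct matching of each group and Lie algebra with its momentum map, and the observation that the global transitivity of the right action delivers the infinitesimal transitivity $\g_M=\ker T\J_L$ demanded by \eqref{mutcomort}.
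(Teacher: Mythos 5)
Your proposal is correct and follows essentially the same route as the paper: the theorem is obtained by combining the two transitivity propositions, which give exactly the mutual complete orthogonality conditions \eqref{mutcomort} on $T^*Q_{KK}^\times$, and then invoking the general implication from Subsection~3.2 that these conditions yield both equalities in \eqref{strong_dual_pair}. Your added observations (global transitivity implies the needed infinitesimal transitivity, and either condition alone already forces both dual pair equalities) are accurate and consistent with the paper's remarks.
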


Recall that the momentum map $ \mathbf{J} _L$ provides the formula for possible singular solutions of the EPAut equations on $M$  (\cite{GBTrVi2013}). Being equivariant, $ \mathbf{J} _L$ is a formally Poisson map relative to the canonical symplectic form on $T^*\Emb_\O(S\x\O, M\x\O)^\x$ and the Lie-Poisson structure on $\mathfrak{aut}(M\times\mathcal{O})^*$. This ensures that the parameterization of the singular solutions in terms of $\mathbf{P}_\mathbf{Q}$ and $\kappa_\ga$ are Clebsch
variables in the sense of \cite{MaWe83}. The dual pair property tells us that $\Aut(S\x\O)$ is the gauge group of that Clebsch representation.
Since $ \mathbf{J} _R$ is $\Aut(S\x\O)$-invariant, it follows that $ \mathbf{J} _R$ is a Noether conserved quantity for the canonical dynamics of these singular solutions.

Whereas the map  $ \mathbf{J} _L$ is a geometric object that is always well-defined, it is not always true that the corresponding EPAut equations admit these singular solutions.
This happens only for a certain class of Hamiltonians $h$ for which the expression $h \circ \mathbf{J} _L$ is well-defined. Such a class includes the modified Camassa-Holm equation (\cite{HoOnTr2009}) together with its higher dimensional and anisotropic versions studied in \cite{HoTr2008}.
It is interesting to mention that while it is well known that the strong solutions of these EPAut equations are described by geodesics of a right-invariant metric on $\Aut(M\x\O)$, the singular solutions also admit a geodesic interpretation. Indeed, this follows from a general result in \cite{GBRa2011b} (see Theorem 2.5), that the singular solutions given by $ \mathbf{J} _L$ are described by geodesics $ t \mapsto ( \mathbf{Q} (t), \gamma (t))$ on a $\Aut(M\x\O)$-orbit in $\Emb_\O(S\x\O, M\x\O)^\x$, relative to the normal metric associated to the right-invariant metric on $\Aut(M\x\O)$.

Being equivariant, the momentum map $ \mathbf{J} _R$ also yields Clebsch variables for the EPAut equation on $S$. The dual pair property again ensures that $\Aut(M\x\O)$ is the gauge group the Clebsch representation.


\section{The incompressible EPAut equation and momentum maps}\label{secti}

In this section we recall the expression of the Euler-Poincar\'e equations on the group of volume preserving automorphisms of a trivial principal bundle (the EPAut$_{\rm\,vol}$ equations) and review from \cite{GBTrVi2013} some facts about the associated pair of momentum maps. We will then focus on a particular case relevant for the Yang-Mills Vlasov equations, arising when the total space of one of the principal bundles is a cotangent bundle (the so called Yang-Mills phase space).

\subsection{The group of volume preserving automorphisms}

Let $\pi:P\rightarrow M$ be a principal $\O$-bundle and suppose that $M$ is orientable, endowed with a Riemannian metric $g$. Let $\mu_M$ be the volume form induced by $g$.

The group $\Aut_{\vol}(P)$ consists, by definition, of the automorphisms of the principal bundle $P$ which descend to volume preserving diffeomorphisms of the base manifold $M$ with respect to the volume form $\mu_M$. Its Lie algebra, denoted by $\mathfrak{aut}_{\vol}(P)$ consists of equivariant vector fields such that their projection to $M$ is divergence free.

\begin{remark}[Kaluza-Klein metric and induced volume]\label{rem_KK}
{\rm
Given a principal connection $\mathcal{A}\in \Omega ^1 (P, \mathfrak{o} )$ on $P$
and an inner product $\tau $ on $\mathfrak{o}$, one defines the Kaluza-Klein Riemannian metric on $P$ as
\[
\ka(U_p,V_p)=g(T\pi(U_p),T\pi(V_p))+\tau (\A(U_p),\A(V_p)),\quad U_p, V_p\in T_pP.
\]
The induced volume form $\mu_P$ on $P$ is given by
\[
\mu_P=\pi^*\mu_M\wedge\mathcal{A}^*\rm{det} _\tau ,
\]
where $\mathcal{A}^*\det_\tau $ denotes the pullback by the connection $\mathcal{A}:TP\to\mathfrak{o}$ of the canonical determinant form induced by $\tau $ on $\mathfrak{o}$.
Supposing that $\tau $ is $\operatorname{Ad}$-invariant, the Kaluza-Klein metric $\ka$ and the volume form $\mu_P$ induced by $\ka$ are $\mathcal{O}$-invariant.
Given $\varphi\in\mathcal{A}ut(P)$, we have the equivalence
\[
\varphi\in\mathcal{A}ut_{\vol}(P)\Leftrightarrow \varphi^*\mu_P=\mu_P,
\]
see Lemma 4.1 in \cite{GBTrVi2013}.
As a consequence, the Lie algebra $\mathfrak{aut}_{\vol}(P)$ coincides with the Lie algebra of equivariant divergence free vector fields with respect to $\mu_P$.}
\end{remark}

When the principal bundle is trivial, i.e. $\pi :P\simeq M\times \mathcal{O}\rightarrow M$, the group $\Aut_{\rm vol}(P)$  is isomorphic to the semidirect product group
\[
\mathcal{A}ut_{\rm vol}(P)\simeq\operatorname{Diff}_{\rm vol}(M)\,\circledS\,\mathcal{F}(M,\mathcal{O}),
\]
where $\operatorname{Diff}_{\rm vol}(M)$ acts on $\mathcal{F}(M,\mathcal{O})$
by composition on the right.  Its Lie algebra is the semidirect product Lie algebra
$\mathfrak{aut}_{\rm vol}(P)\simeq\mathfrak{X}_{\rm vol}(M)\,\circledS\,\mathcal{F}(M,\mathfrak{o})$.
Using the $L^2$ pairing associated to the volume form $ \mu _M $, we identify the dual as
\begin{align*}
\mathfrak{aut}_{\vol}(P)^*\simeq\X_{\vol}(M)^*\x\F(M,\ou)^*=\Om^1(M)/\mathbf{d} \F(M)\x\F(M,\ou^*).
\end{align*}
The Euler-Poincar\'e equations
on the automorphism group $\mathcal{A}ut_{\rm vol}(P)$ (the EPAut$_{\rm\,vol}$ equations) take the following form when $P$ is a trivial principal bundle.

\begin{proposition}[The EPAut$_{\rm vol}$ equations on a trivial principal bundle] 
{\rm\cite{GBTrVi2013}}\label{EPaut_vol_trivial}
Consider a reduced Lagrangian $\ell:\mathfrak{X}_{\rm vol}(M)\,\circledS\,\mathcal{F}(M,\mathfrak{o}) \to\RR$. Then the associated EPAut$_{\vol}$ equations are 
\begin{equation}\label{EPAut_vol_trivial}
\left\{\begin{array}{l}
\vspace{0.2cm}\displaystyle
\frac{\partial}{\partial t} \frac{\delta \ell}{\delta\mathbf{u}} +\pounds_{\mathbf{u}}\frac{\delta \ell}{\delta\mathbf{u}}+\frac{\delta \ell}{\delta{\nu}}\!\cdot\!\mathbf{d}{\nu}=-\dd p\\
\displaystyle\frac{\partial}{\partial t}\frac{\delta \ell}{\delta{\nu}}+\operatorname{ad}^*_{{\nu}}\frac{\delta \ell}{\delta{\nu}}+\dd\left(\frac{\delta \ell}{\delta{\nu}}\right)\cdot\mathbf{u}=0,
\end{array}\right.
\end{equation}
where $(\mathbf{u},{\nu})\in\mathfrak{X}_{\rm vol}(M)\,\circledS\,\mathcal{F}(M,\mathfrak{o})$ and the operator $\pounds_\mathbf{u}$ denotes the Lie derivative acting on one-forms. The first equation is written in $ \Omega  ^1 (M)$ and $p\in\F(M)$ denotes the pressure, determined from the incompressibility condition $ \operatorname{div} \mathbf{u} =0$.
\end{proposition}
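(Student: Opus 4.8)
The plan is to obtain \eqref{EPAut_vol_trivial} by Euler-Poincar\'e reduction applied directly to the subgroup $\Aut_{\vol}(P)\subset\Aut(P)$, whose Lie algebra is $\g_{\vol}=\X_{\vol}(M)\,\circledS\,\F(M,\ou)$. The abstract right-invariant Euler-Poincar\'e equation reads $\pa_t\frac{\delta\ell}{\delta\xi}+\ad^*_\xi\frac{\delta\ell}{\delta\xi}=0$ with $\xi=(\mathbf{u},\nu)$, so everything reduces to computing the coadjoint operator of $\g_{\vol}$. Since $\g_{\vol}$ is a subalgebra of $\mathfrak{aut}(P)$, its bracket is the restriction of the one underlying Proposition \ref{EPaut_trivial}, namely
\[
[(\mathbf{u}_1,\nu_1),(\mathbf{u}_2,\nu_2)]=\big([\mathbf{u}_1,\mathbf{u}_2],\ \mathbf{u}_1[\nu_2]-\mathbf{u}_2[\nu_1]+[\nu_1,\nu_2]\big),
\]
so that pairing with a test element $(\mathbf{w},\eta)$ via the $L^2$ pairing induced by $\mu_M$ produces exactly the transport term $\pounds_\mathbf{u}\frac{\delta\ell}{\delta\mathbf{u}}$ together with the coupling $\frac{\delta\ell}{\delta\nu}\!\cdot\!\dd\nu$ in the fluid slot, and the terms involving $\frac{\delta\ell}{\delta\nu}$ in the gauge slot, just as in \eqref{EPAut_trivial}. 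Equivalently, this is the constrained version of the EPAut equations of Proposition \ref{EPaut_trivial} in which the pressure plays the role of the Lagrange multiplier enforcing incompressibility.

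Two features then distinguish the volume-preserving case, and both come from the two changes of dual space recorded before the statement. First, in each slot I would trivialize the density factor by the fixed volume form $\mu_M$; because $\div\mathbf{u}=0$ the flow of $\mathbf{u}$ preserves $\mu_M$, so every density-valued Lie derivative collapses onto the trivialized object. In the fluid slot this turns $\pounds_\mathbf{u}$, acting a priori on $\Om^1(M)\otimes\Den(M)$, into the Lie derivative on one-forms appearing in the first line of \eqref{EPAut_vol_trivial}; in the gauge slot, whose dual is now $\F(M,\ou^*)$ rather than an $\ou^*$-valued density, the same mechanism replaces $\pounds_\mathbf{u}\frac{\delta\ell}{\delta\nu}$ by the plain directional derivative $\dd\!\big(\frac{\delta\ell}{\delta\nu}\big)\!\cdot\!\mathbf{u}$ of the second line. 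Second, the dual of $\X_{\vol}(M)$ is the quotient $\Om^1(M)/\dd\F(M)$, so the fluid equation holds only modulo exact one-forms; choosing a one-form representative lifts it to an identity in $\Om^1(M)$ valid up to the addition of a term in $\dd\F(M)$, which I would write as $-\dd p$. No such ambiguity arises in the gauge slot, since the variations $\eta\in\F(M,\ou)$ there are unconstrained, which is precisely why the pressure appears only in the first equation.

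The step I expect to require the most care is the identification of the annihilator of $\X_{\vol}(M)$ inside $\Om^1(M)$ with the space $\dd\F(M)$ of exact one-forms, together with the claim that $p$ is thereby determined. The inclusion $\dd\F(M)\subset\X_{\vol}(M)^\circ$ is immediate by integration by parts, $\int_M(\dd p)\!\cdot\!\mathbf{w}\,\mu_M=-\int_M p\,\div\mathbf{w}\,\mu_M=0$ whenever $\div\mathbf{w}=0$; the reverse inclusion, and the existence and uniqueness of $p\in\F(M)$ (up to an additive constant) realizing the quotient, follow from the Hodge decomposition of one-forms on the closed manifold $M$ and the solvability of the Poisson equation $\Delta p=\div(\cdots)$ obtained by taking the divergence of the first line of \eqref{EPAut_vol_trivial}. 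This is exactly the pressure-as-Lagrange-multiplier mechanism of the incompressible Euler equations, now coupled to the gauge variable $\nu$ through the term $\frac{\delta\ell}{\delta\nu}\!\cdot\!\dd\nu$, and it confirms that the constraint $\div\mathbf{u}=0$ is propagated by the dynamics.
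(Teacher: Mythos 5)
The paper states this proposition without proof, deferring to \cite{GBTrVi2013} (and to \cite{GBRa2008a} for the Lagrangian reduction framework), so there is no in-paper argument to compare against. Your derivation is correct and is the standard one: Euler--Poincar\'e reduction on the subgroup $\mathcal{A}ut_{\rm vol}(P)$, with the two modifications relative to Proposition \ref{EPaut_trivial} coming exactly from trivializing the density factors by $\mu_M$ (so that $\pounds_{\mathbf u}$ on $\Omega^1(M)\otimes\operatorname{Den}(M)$ collapses to $\pounds_{\mathbf u}$ on $\Omega^1(M)$, and on $\mathcal F(M,\mathfrak o^*)\otimes\operatorname{Den}(M)$ to $\mathbf d(\cdot)\cdot\mathbf u$, using $\operatorname{div}\mathbf u=0$) and from the identification $\mathfrak X_{\rm vol}(M)^*=\Omega^1(M)/\mathbf d\mathcal F(M)$, which produces the pressure term $-\mathbf d p$ upon choosing a one-form representative; your Hodge-theoretic justification of the annihilator identity and of the solvability of the Poisson equation for $p$ on the closed manifold $M$ is the right way to make this rigorous.
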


We refer to \cite{GBTrVi2013} for the description of the  EPAut$_{\rm vol}$ equations on an arbitrary principal bundle.


\subsection{Review of the ideal fluid case} 

The pair of momentum maps associated to the Euler equations 
discovered in \cite{MaWe83} justifies geometrically the existence of Clebsch canonical variables for ideal fluid motion and explains the Hamiltonian structure of point vortex solutions in terms of the Hamiltonian structure of the Euler equations. As claimed in \cite{MaWe83}, and rigorously shown in \cite{GBVi2011}, this pair of momentum maps forms a {dual pair}.

Given a compact volume manifold $(S, \mu )$ and a symplectic manifold $(M, \omega )$, the pair of momentum maps arise from the commuting symplectic actions of the groups $ \operatorname{Diff}_{\symp}(M)$ and $ \operatorname{Diff}_{\vol}(S)$ on the Fr\'echet manifold $ \Emb(S,M)$ endowed with the symplectic form
\[
\bar\omega(f)(u_f,v_f):=\int_{S}\omega(f(x))(u_f(x),v_f(x))\mu_{S}.
\]
In order to get Hamiltonian actions it is needed to replace these groups by the subgroups $\operatorname{Diff}_{\ham}(M)$ and $\operatorname{Diff}_{\ex}(S)$ of Hamiltonian and exact volume preserving diffeomorphisms, respectively. Furthermore, in order to have equivariance,
required from the dual pair properties, it is needed to consider central extensions of these groups, given by the group of quantomorphisms (central extension of 
$\Diff_{ham}(M)$) and the Ismagilov
central extension of $ \operatorname{Diff}_{\ex}(S )$, respectively (\cite{GBVi2011}). 

In the particular case when  the symplectic form is exact, $ \omega=-\dd\theta $, one can stay with the whole group $ \operatorname{Diff}_{\vol}(S)$ (instead of  $ \operatorname{Diff}_{\ex}(S)$) and the central extension is not needed. Another simplification arises in this case, since the quantomorphism group can be written as a topologically trivial extension of $ \operatorname{Diff}_{\ham}(M)$, with the help of a group $2$-cocycle $B$ (\cite{IsLoMi2006}), 
\[
B(\ph_1,\ph_2):=\int_{m_0}^{\ph_2(m_0)}\left(\theta-\ph_1^*\theta\right),\quad \ph_1,\ph_2\in\Diff_{\ham}(M),
\]
extension denoted by $\operatorname{Diff}_{\ham}(M) \times _B \mathbb{R}$.


\paragraph{The right action momentum map.}
The natural action of the group $\Diff_{\vol}(S)$ on $\Emb(S,M)$ is Hamiltonian with equivariant momentum map $\J_R(f)=[f^*\th]$, where $\omega=-\dd\theta$.
Here the dual of $\X_{\vol}(S)$ is identified with the quotient space $\Om^1(S)/\mathbf{d} \Om^0(S)$.
If in addition $H^1(S)=0$, then this dual can be identified with the space $\dd\Om^1(S)$ of vorticities,
and the right momentum map  becomes $\J_R(f)=-f^*\omega$, \cite{MaWe83}.

\begin{lemma}\label{lem3}
$\Diff_{\ham}(M)$ acts transitively on connected components of level sets of the right leg momentum map $\J_R:\Emb(S,M)\to \Om^1(S)/\dd\Om^0(S)$, $\J_R(f)=[f^*\th]$.
\end{lemma}
\begin{proof}
The transitivity of the action of $\Diff_{\ham}(M)$ 
on connected components of level sets of $\J_R$ follows from the
transitivity of the action of $\X_{\ham}(M)$ on level sets of $\J_R$,
since the constructions can be performed smoothly depending on a parameter.
We start again with an arbitrary vector field $v_f$ on $M$ along $f$ such that
$T_f\J_R\cdot v_f=[f^*\pounds _{v_f}\th]=0$.
There exists a function $\bar h\in C^\oo(S)$ whose differential is $\mathbf{d} \bar h=-f^*\mathbf{i} _{v_f}\mathbf{d} \th$.
We extend it to a function $h_1\in C^\oo(M)$ such that $\bar h=h_1\o f$.
Now the 1-form $\be$ on $M$ along $S$ defined by
\[
\be=\mathbf{d} h_1\o f+\mathbf{i} _{v_f}(\mathbf{d} \th\o f)\in\Ga(f^*T^*M)
\] 
vanishes on vectors tangent to $f(S)\subset M$. 
By Lemma 4.2 from \cite{GBVi2011} we find $h_2\in C^\oo(M)$ such that $\be=\mathbf{d} h_2\o f$.
It follows that $\mathbf{d} (h_1-h_2)\o f=-\mathbf{i} _{v_f}d\th=\mathbf{i} _{v_f}\om$,
so $v_f=X_{h_1-h_2}\o f\in\X_{\ham}(M)_{\Emb}(f)$.
\end{proof}

\paragraph{The left action momentum map.}
The Lie algebra of the quantomorphism group is the central extension $C^\oo(M)$ of the Lie algebra of Hamiltonian vector fields
\begin{gather*}
h\in C^\oo(M)\;\longmapsto\; X_h\in\X_{\ham}(M),\quad \mathbf{i} _{X_h}\om=\mathbf{d} h.
\end{gather*}
Its dual can be identified with the space of compactly supported densities $\Den_c(M)$, 
so the infinitesimally equivariant left leg momentum map is $\J_L(f)=f_*\mu$,  \cite{MaWe83}.

\begin{lemma}\label{lem1}{\rm\cite{GBVi2011}}
$\Diff_{\vol}(S)$ acts transitively on level sets of the left leg momentum map $\J_L:\Emb(S,M)\to
\Den_c(M)$.
\end{lemma}

\begin{proof}
Let $f_1,f_2\in\Emb(S,M)$ such that $\J_L(f_1)=\J_L(f_2)$.
Then we have 
\begin{equation}\label{inte}
\int_S(h\o f_1)\mu=\int_S(h\o f_2)\mu, \quad h\in C^\oo(M).
\end{equation}
A first consequence is that the two embeddings have the same image in $M$,
so there exists $\ps\in\Diff(S)$ such that $f_2=f_2\o\ps$.
We rewrite the identity \eqref{inte} as $\int_S(h\o f_2)\ps^*\mu=\int_S(h\o f_2)\mu$
for all $h\o f_2\in C^\oo(S)$, and we deduce $\ps^*\mu=\mu$.
We found $\ps\in\Diff_{\vol}(S)$ such that $f_2=f_1\o\ps$.
\end{proof}

\medskip

The group of volume preserving diffeomorphisms $\Diff_{\vol}(S)$
and the quantomorphism group $\Diff_{\ham}(M)\x_B\RR$ have mutually completely orthogonal actions on the manifold of embeddings  $\Emb(S,M)$, so 

\begin{picture}(150,100)(-70,0)%
\put(105,75){$\Emb(S,M)$}

\put(90,50){${J}_L$}

\put(160,50){${J}_R$}

\put(2,15){$(M)=C^\oo(M)^*$}

\put(160,15){$\X_{\vol}(S)^*=\Om^1(S)/\mathbf{d} \Om^0(M)$}

\put(130,70){\vector(-1, -1){40}}

\put(135,70){\vector(1,-1){40}}

\end{picture}

\noindent is a dual pair. 


\subsection{A pair of momentum maps for the EPAut$_{\rm vol}$ equations}

The above setting for the ideal fluid equations 
has been developed in \cite{GBTrVi2013} for the EPAut$_{\rm vol}$ equations
as follows.

Let $\pi_S:P_S\rightarrow S$ be a principal $\O$-bundle and consider another principal $\O$-bundle $\pi_M :P_M\rightarrow M$ such that $P_M$ carries an exact symplectic form $\omega=-\mathbf{d}\theta$, where $\theta\in \Omega ^1 (P_M)$ is $\mathcal{O}$-invariant. Assume that both $S$ and $ \mathcal{O} $ are compact,
hence $P_S$ is compact too.
As above, we endow $P_S$ with the $\mathcal{O}$-invariant volume form
$\mu_{P_S}=\pi^*\mu_S\wedge\mathcal{A}^*\operatorname{det}_\tau $, where $\tau $ is an $\operatorname{Ad}$-invariant inner product on
$\mathfrak{o}$ and $\A$ a principal connection on $P_S$.

The space  of $ \mathcal{O} $-equivariant embeddings from $P_S$ into $P_M$,
denoted by $Q_{KK}=\Emb_{\O}(P_S,P)$, is a Fr\'echet manifold
because $P_S$ is compact (see Remark \ref{smooth}).
We endow the manifold $Q_{KK}$ with the symplectic form $\bar\omega$ given by
\[
\bar\omega(f)(u_f,v_f):=\int_{P_S}\omega(f(p))(u_f(p),v_f(p))\mu_{P_S}.
\]
The function under the integral is $\mathcal{O}$-invariant, so the right hand side can be written as an integral over $S$. 
The local triviality of the bundle $P_S\to S$ ensures the non-degeneracy of $\bar\om$.

We describe below the momentum maps associated to the EPAut$_{\rm \, vol}$ equations. In this context, the manifold $\Emb(S,M)$ and the groups $ \operatorname{Diff}_{\vol}(S, \mu )$  and $ \operatorname{Diff}_{\ham}(M) \times _B \mathbb{R}  $ of the ideal fluid case will be replaced by the manifold  $\Emb_{\O}(P_S,P_M )$, the group $\mathcal{A}ut_{\vol}(P_S)$ of volume preserving automorphisms, and the group $\mathcal{VC}hrom(P_M)$ of Vlasov chromomorphisms, respectively.

\color{black}

\subsubsection{Left action momentum map}

Let us denote by $\Aut_{\ham}(P_M):={\mathcal{A}ut(P_M )\cap \operatorname{Diff}_{\ham}(P_M )}$
the group of Hamiltonian automorphisms of $P_M$ whose Lie algebra $ \mathfrak{aut}_{\ham}(P_M)$ consists of $ \mathcal{O} $-equivariant Hamiltonian vector fields on $P_M$. This group acts symplectically by composition on the left on $\operatorname{Emb}_\mathcal{O}(P_S,P_M)$ and admits a momentum which is not equivariant and hence not Poisson. As in \cite{GBVi2011}, in order to obtain an equivariant momentum map, we have to consider the central extension of
$\mathcal{A}ut_{\ham}(P_M)$ by the cocycle  
\begin{equation}\label{cocycle}
B(\ph_1,\ph_2):=\int_{p_0}^{\ph_2(p_0)}\left(\theta-\ph_1^*\theta\right),\quad \ph_1,\ph_2\in
\Aut_{\ham}(P_M),
\end{equation}
\color{black}where the integral is taken along a smooth curve connecting the point $p_0$ with the point $\ph_2 (p_0)$.
The cohomology class of $B$ is independent of the choice of the point $p_0$ and the one-form
$ \theta$ such that $-\mathbf{d}\theta= \omega $, see Theorem 3.1 in \cite{IsLoMi2006}. As shown in \cite{GBTrVi2013}, in order to obtain an equivariant momentum map, one needs to consider the subgroup $\overline{\mathcal{A}ut}_{\ham}(P_M )$ of $\mathcal{A}ut_{\ham}(P_M )$, whose Lie algebra consists of Hamiltonian vector fields associated to $\mathcal{O}$-invariant Hamiltonian functions on $(P_M,\omega)$.

This group is referred to as the group of  \textit{special Hamiltonian automorphisms}.
Its central extension $\mathcal{VC}hrom(P_M):=\overline{\mathcal{A}ut}_{\ham}(P_M)\times_B\mathbb{R}$ is called the \textit{Vlasov chromomorphism group} since it is the configuration Lie group for Yang-Mills Vlasov plasmas in chromohydrodynamics. The Lie algebra of $\mathcal{VC}hrom(P_M )$ is isomorphic to the space of functions on $M$ whose Lie bracket is given by the reduced Poisson bracket on $M=P _M / \mathcal{O} $ obtained by reduction of the symplectic Poisson bracket on $(P_M , \omega )$. We refer to \cite{GBTrVi2013} for more details regarding the definition of these groups.

The group $\mathcal{VC}hrom(P_M )$ acts symplectically on the left on the symplectic manifold $(\operatorname{Emb}_\mathcal{O}(P_S,P),\bar\omega)$ and admits the momentum map
\begin{equation}\label{left_momap_autvol}
\mathbf{J}_L:\Emb_{\O}(P_S,P_M )\rightarrow\mathcal{F}(M)^*=\mathcal{F}_\mathcal{O}(P_M )^*,\quad  
\left\langle\mathbf{J}_L(f), h\right\rangle=\int_{P_S}\tilde h(f(p))\mu_{P_S},
\end{equation}
where $h \in \mathcal{F} (M)$ and $\tilde h=h\o\pi_M\in \mathcal{F} _ \mathcal{O} (P_M)$. Since the function $p\mapsto\tilde h(f(p))$ on $P_S$ is $\mathcal{O}$-invariant, it defines a function on $S$, and we have in fact an integral over $S$.
By abuse of notation, we can write
\[
\mathbf{J}_L(f)=\int_S\delta (n-f(p))\mu_S\in\mathcal{F}(M)^*.
\]

\begin{remark}[Special Hamiltonian automorphisms]{\rm It is interesting to recall here that the group $ \operatorname{Diff}_{\ham}(M, \omega )$ is of central importance in Hamiltonian mechanics, since it contains the flows of Hamiltonian systems on the symplectic manifold $(M, \omega )$. In the same way, the group  $\overline{\mathcal{A}ut}_{\ham}(P_M, \omega )$ of special Hamiltonian automorphisms, where $P_M \rightarrow M$ is a $ \mathcal{O} $-principal bundle and the symplectic form $ \omega $ is $ \mathcal{O} $-invariant, is the corresponding group in the case of Hamiltonian systems with symmetries. 
It contains the flows of Hamiltonian systems with $ \mathcal{O} $-symmetries.

An important example in this context are Wong's equations for a nonabelian charged particle in a fixed Yang-Mills field. They arise as a Hamiltonian system
on $(P_M, \omega )=(T^*P, \Omega _{\rm can})$, where $P \rightarrow Q$ is a $ \mathcal{O} $-principal bundle over the physical space $Q$ of the particle, see \cite{Mo84}.
}
\end{remark}

\subsubsection{Right action momentum map}

The group $\mathcal{A}ut_{\vol}(P_S)$ acts symplectically on the right on the symplectic manifold $(\operatorname{Emb}_\mathcal{O}(P_S,P),\bar\omega)$ and admits the momentum map
\begin{equation}\label{right_momap_autvol}
\mathbf{J}_R:\operatorname{Emb}_\mathcal{O}(P_S,P_M )\rightarrow\mathfrak{aut}_{\vol}(P_S)^*=\Omega^1_\mathcal{O}(P_S)/\mathbf{d}\F_{\mathcal{O}}(P_S),\quad \mathbf{J}_R(f)=[f^*\theta].
\end{equation}
{The identification $\mathfrak{aut}_{\vol}(P_S)^*=\Omega^1_\mathcal{O}(P_S)/\mathbf{d}\F_{\mathcal{O}}(P_S)$ is made by using the duality pairing induced by the duality pairing
\begin{equation}\label{L2pairing} 
\left\langle \alpha , X \right\rangle :=\int_{P_S} \alpha ( X) \mu _{P_S}
\end{equation} 
between $\mathfrak{aut}(P_S)$ and $ \mathfrak{aut}(P_S) ^\ast= \Omega ^1 (P_S)$. Note that in \eqref{L2pairing} the function $ \alpha ( X)$ is $ \mathcal{O} $-invariant since both $X$ and $ \alpha $ are equivariant, so it induces a function on $S$ so that the duality pairing can be rewritten as an integral over $S$ with respect to $ \mu _S $.

\subsubsection{The pair of momentum maps}

In summary, we have the following pair of momentum maps associated to the EPAut$_{\rm\, vol}$ equation

\begin{picture}(150,100)(-70,0)%
\put(102,75){$\Emb_{\O}(P_S,P_M)$}

\put(90,50){$\mathbf{J}_L$}

\put(160,50){$\mathbf{J}_R$}

\put(60,15){$\mathcal{F}(M)^*$
}

\put(160,15){$\mathfrak{aut}_{\vol}(P_S)^*$.
}

\put(130,70){\vector(-1, -1){40}}

\put(135,70){\vector(1,-1){40}}

\end{picture}

Being equivariant, the momentum maps $ \mathbf{J} _L$ and $ \mathbf{J} _R$ are Poisson maps and hence yield Clebsch variables for the Lie-Poisson systems on $\mathcal{F}(M)^\ast $ and the EPAut$_{\rm \, vol}$ equations on $P_S$, respectively. Note that the Lie-Poisson system on $\mathcal{F}(M)^\ast$ is a Vlasov system whose Poisson bracket is not symplectic but is the reduced Poisson bracket on $M= P_M/\mathcal{O} $.
The momentum map $ \mathbf{J} _L $ provides the expression of singular solutions for this Lie-Poisson system.

When $P_M=T^*\bar P$, where $\bar P$ is itself a $ \mathcal{O}$-principal bundle, this system
is related to Yang-Mills Vlasov equation and in this case $ \mathbf{J} _L$ can be identified with the single particle
solution, which is of central importance for the theory of Yang-Mills
charged fluids, \cite{GiHoKu1983}. This particular setting will be considered in
the following section.

Note also that, since both momentum maps are invariant under the action of the group associated to their partner momentum map, they also provide Noether conserved quantities for these Clebsch variables. The dual pair property that will be shown below will allow us to identify the gauge groups of these Clebsch representations.

\subsection{Yang-Mills phase space}\label{yamil}

We now consider the special case when the total space of the principal
bundle $\pi _M :P_M \rightarrow M$ is the cotangent bundle of another
principal bundle $\bar \pi : \bar P\rightarrow \bar M$. We endow $P_M =T^*\bar P$ with the canonical symplectic form $\Omega_{\bar
P}=-\mathbf{d}\Theta_{\bar P}$ and we let $\mathcal{O}$ act on $T^*\bar P$ by cotangent lift. Thus we have $P_M=T^*\bar P \rightarrow M=T^*\bar P/\mathcal{O}$. 
This particular choice is motivated by the example of the Yang-Mills Vlasov equation, as mentioned in \cite{GBTrVi2013}, in which case $\bar \pi : \bar P \rightarrow \bar M$ is the principal bundle of the Yang-Mills theory involved.
If moreover $\bar \pi :\bar P \simeq\bar M\times\mathcal{O}\rightarrow M$ is a trivial $\O$-bundle, then we have $P_M =T^*\bar M\times T^*\mathcal{O}$ and $M=T^*\bar M\times\mathfrak{o}^*$, so that the left momentum map \eqref{left_momap_autvol} takes value in the space $ \mathcal{F} (T^*\bar M \times \mathfrak{o} ^\ast ) ^\ast $ of Yang-Mills Vlasov distributions. Note that in this case $\pi _M : P_M\rightarrow M$ is also a trivial principal $\O$-bundle, since we have the equivariant diffeomorphism
\begin{equation}\label{equiv_diffeo}
\rho: T^*\bar M\times T^*\mathcal{O} \rightarrow  \left(T^*\bar M\times \mathfrak{o}^*\right)\times\mathcal{O},\quad
\rho(\alpha_q,\alpha_g)=\left((\alpha_q,\alpha_gg^{-1}),g\right).
\end{equation}
If  the bundle $\pi _S :P_S \simeq S\x\O \rightarrow S$ is also trivial, then we have the identification
\begin{equation}\label{identification} 
Q_{KK}=\Emb_{\O}(P_S,T^*\bar P)\simeq\Emb(S,T^*\bar M\times\mathfrak{o}^*)\x\F(S,\mathcal{O})
\end{equation} 
(see Lemma 3.8 in \cite{GBTrVi2013}).
More precisely, to the equivariant embedding $f:S\times\mathcal{O}\rightarrow T^*\bar M\times T^*\mathcal{O}$, $f(x,g)=\left(\PP_\QQ(x),\kappa_\ga(x)g\right)$, where $\PP_\QQ:S\rightarrow T^*\bar M$ and $\kappa_\ga:S\rightarrow T^*\mathcal{O}$,
we associate the pair $((\PP_\QQ,\si),\ga)\in \Emb(S,T^*\bar M\times\mathfrak{o}^*)\x\F(S,\mathcal{O})$, where $ \sigma := \kappa _\gamma \gamma ^{-1} : S \rightarrow \mathfrak{o} ^\ast $. 
Note also that, since the bundle $P_S$ is trivial, we have $ \mu _{P_S}= \mu _S \wedge \operatorname{det}_ \tau $. 
Choosing the Ad-invariant inner product $ \tau  $ such that $ \operatorname{Vol} ( \mathcal{O} )=1$, we have  $ \mu _{P_S}= \mu _S \wedge \mu _ \mathcal{O} $, where $ \mu _ \mathcal{O} $ is the Haar measure on the compact group $\mathcal{O}$.

The pair of momentum maps \eqref{right_momap_autvol} and \eqref{left_momap_autvol} becomes

\begin{picture}(150,100)(-70,0)%
\put(80,75){$\Emb_{\O}(S\times \mathcal{O} ,T^*\bar M\times T^*\mathcal{O})$}

\put(90,50){$\mathbf{J}_L$}

\put(160,50){$\mathbf{J}_R$}

\put(50,15){$\mathcal{F}(T^*\bar M\times\mathfrak{o}^*)^*$
}

\put(160,15){$\mathfrak{aut}_{\vol}(S\x\O)^*$.
}

\put(130,70){\vector(-1, -1){40}}

\put(135,70){\vector(1,-1){40}}

\end{picture}

Notice that the Lie bracket on 
$\mathcal{F}(T^*\bar M\times\mathfrak{o}^*)$ is the reduced Poisson bracket given here by
\[
\{f,g\}_{M}=\{f,g\}_{T^*\bar M}+\{f,g\}_+,\quad f, g \in \mathcal{F}(T^*\bar M\times\mathfrak{o}^*),
\]
where the first term denotes the canonical Poisson bracket on
$T^*\bar M$ and the second term is the Lie-Poisson bracket on
$\mathfrak{o}^*$,
\[
\{f,g\}_{+}(\si)=\left\langle \si,\left[ \frac{\de f}{\de \si},\frac{\de g}{\de \si}\right] \right\rangle ,\quad f,g\in C^\oo(\mathfrak{o}^*).
\]
It is obtained by Poisson reduction of the canonical Poisson bracket on $T^*(\bar M \times \mathcal{O}) $.

The expression of these momentum maps were obtained in \cite{GBTrVi2013}. For later use, we provide below some details concerning their derivation.


\subsubsection{Left action momentum map}

By specifying formula \eqref{left_momap_autvol} to our case, we can write the left momentum map as
\[
\mathbf{J}_L:\Emb_{\O}(S\times\mathcal{O}, T^*\bar M\times T^*\mathcal{O})\rightarrow\mathcal{F}_\mathcal{O}(T^*\bar M\times T^*\mathcal{O})^*=\mathcal{F}(T^*\bar M\times\mathfrak{o}^*)^*,
\]
where, for $h \in \mathcal{F}(T^*\bar M\times\mathfrak{o}^*)$, $\tilde h=h\o\pi_M\in \mathcal{F}(T^*\bar M\times T^*\mathcal{O} )$ and $f(x,g)=((\PP_\QQ(x), \kappa _\gamma (x)g)$,
\begin{align*}
\left\langle\mathbf{J}_L(f),{h}\right\rangle
=\int_{S\times\mathcal{O}}\tilde h(\PP_\QQ(x),\kappa_\ga(x)g)\mu_{P_S}
=\int_S h\left(\PP_\QQ(x),\kappa_\ga(x)\ga(x)^{-1}\right)\mu_S.
\end{align*}
Formally, using the identification $f=((\PP_\QQ,\si),\ga)$ given in \eqref{identification}, this can be written as
\begin{equation}\label{unu}
\J_L(\PP_\QQ,\si,\ga)=(\PP_\QQ,\si)_*\mu_S.
\end{equation}



\subsubsection{Right action momentum map}

By specifying formula \eqref{right_momap_autvol} to our case, we can write the momentum map of the right action as
\begin{gather*}
\mathbf{J}_R:\Emb_{\O}(S\times\mathcal{O}, T^*\bar M\times T^*\mathcal{O})\rightarrow
\mathfrak{aut}_{\vol}(S\x\O)^*=\Om^1_\O(S\x\O)/\dd\F(S)\\
\J_R(f)=[f^*\Th_{\bar P}]=[f^*(\Th_{\bar M}+\Th_\O)],
\end{gather*}
where $\Th_{\bar P}$, $\Theta_{\bar M}$, $\Theta_\mathcal{O}$ are the canonical one-forms on 
$T^*\bar P$, $T^*{\bar M}$, $T^*\mathcal{O}$, respectively.

The identification of $\X_{\vol}(S)\,\circledS\, \F(S,{\mathfrak{o} })$ with the Lie algebra $\mathfrak{aut}_{\vol}(S\x\O)$
of  invariant divergence free vector fields on
$S\x\O$, namely $(u,\xi)(x,g)={(u(x),\xi(x)g)}$,
provides an identification of their duals: 
$(\X_{\vol}(S)\,\circledS\, \F(S,\O))^*=\left(\Omega^1(S)/\mathbf{d}\mathcal{F}(S)\right)\times\mathcal{F}(S,\mathfrak{o}^*)$ with $\mathfrak{aut}_{\vol}(S\x\O)^*=\Om^1_\O(S\x\O)/\dd\F(S)$, {via the map $([\al], \nu ) \mapsto [ (\alpha , \nu )]$}, where
$(\al,\nu)(v_x,\xi_g)=\al(v_x)+\nu(x)(\xi_gg^{-1})$.

We now show that in terms of the identification $f=((\PP_\QQ,\si),\ga)$ in \eqref{identification}, the right momentum map has the expression
\begin{equation}\label{doi}
\J_R((\PP_\QQ,\si),\ga)
=\left( \left[\PP_\QQ^*\Theta_{\bar M}+\langle\si, \de^r\ga\rangle\right],\Ad^*_\ga\si\right)\in \left(\Omega^1(S)/\mathbf{d}\mathcal{F}(S)\right)\times\mathcal{F}(S,\mathfrak{o}^*).
\end{equation}

Knowing that $\ka_\ga=\si\ga$, the first component $\al\in\Om^1(S)$ of 
$f^*\Th_{\bar P}$ 
reads
\begin{align*}
\al(v_x)=(f^*(\Th_{\bar M}+\Th_\O))(v_x,0_e)
=\PP_\QQ^*\Theta_{\bar M}(v_x)+\kappa_\ga^*\Theta_\mathcal{O}(v_x),
\end{align*}
for all $ v _x \in T_xS$. Using the definition of the canonical one-form $\Th_\O$, we have
\begin{align*}
(\ka_\ga^*\Th_\O)(v_x)
=\left\langle \ka_\ga(x),T_x(\pi\o\ka_\ga)\cdot v_x\right\rangle =\left\langle \si(x)\ga(x),T_x\ga \cdot v_x\right\rangle 
=\left\langle \si,\de^r\ga\right\rangle (v_x),
\end{align*}
{where $ \pi :T^* \mathcal{O} \rightarrow \mathcal{O} $}.

Given $\be\in T^*\O$, we denote by $\ell_\be:\O\to T^*\O$ the orbit map defined by $\ell_\be(g)=\be g$.
For the second component $\nu\in\F(S,\ou^*) $ of $f^*\Th_{\bar P}$ we compute
for all $\xi\in\mathfrak{o}$:
\begin{align*}
\langle\nu(x),\xi\rangle
&=(f^*\Th_{\bar P})(0_x,\xi)
=\Th_\O(T\ell_{\ka_\ga(x)}(\xi))
=\left\langle \ka_\ga(x), (T\pi\o T\ell_{\ka_\ga(x)})(\xi)\right\rangle \\
&=\left\langle \ka_\ga(x),\ga(x)\xi\right\rangle = \langle\ga(x)^{-1}\ka_\ga(x),\xi\rangle,
\end{align*}
where we used the identity $(T\pi\o T\ell_\be)(\xi)=\pi(\be)\xi$, valid for all $\be\in T^*\O$. 
Hence we obtain $\nu=\ga^{-1}\ka_\ga=\ga^{-1}\si\ga=\Ad^*_\ga\si$. 
By combining the above formulas we get the desired expression \eqref{doi}. 

Note that when $H ^1 (S)=0$, then the dual space $\left(\Omega^1(S)/\mathbf{d}\mathcal{F}(S)\right)\times\mathcal{F}(S,\mathfrak{o}^*)$ is isomorphic to the space $\mathbf{d} \Omega^1(S)\times\mathcal{F}(S,\mathfrak{o}^*)$ and we can write the first component of the momentum map as a vorticity as follows
\[
\mathbf{J} _R((\PP_\QQ,\si),\ga)= \left( -\PP_\QQ^*\Omega _{\bar M}-( \sigma \gamma ) ^\ast\Omega _\mathcal{O}, \Ad^*_\ga\sigma \right),
\]
where $\Omega _{\bar M}= - \mathbf{d} \Theta  _{\bar M}$ and $\Omega _\mathcal{O}= - \mathbf{d} \Theta  _\mathcal{O}$ are the canonical symplectic forms on $T^*\bar M$ and $T^* \mathcal{O} $, respectively.


\section{The dual pair property of the EPAut$_{\rm vol}$ momentum maps}\label{sec_5} 

In this section we will focus on the particular case of the Yang-Mills phase space described in \S\ref{yamil}. We assume that the principal bundles are trivial, i.e. $P_M=T^*\bar P$, $\bar P\simeq \bar M \times \mathcal{O} $, $P_S\simeq S \times \mathcal{O} $, so the commuting actions of the groups $\Aut_{\vol}(S \x \O)$ and $\overline\Aut_{\ham}(T^*\bar M\x T^*\O)$ on the symplectic manifold $Q_{KK}$  become
\begin{align*}
(\et,\ga)\cdot (\ps,b)&=(\et\o\ps,(\ga\o\ps)b),\quad\ps\in\Diff(S),\;\;b\in\F(S,\O)\\
(\ph,a)\cdot (\et,\ga)&=(\ph\o\et,(a\o \et)\ga),\quad\ph\in\Diff(T^*\bar M\x \ou^*),\;\;a\in\F(T^*\bar M\x \ou^*,\O),
\end{align*}
for $(\et,\ga)\in Q_{KK}\simeq\Emb(S,T^*\bar M\times\mathfrak{o}^*)\x\F(S,\mathcal{O})$ (see \eqref{identification}). The associated momentum maps have been described in \eqref{unu} and \eqref{doi}.

In the next two propositions, we shall show the transitivity results needed to obtain the dual pair property of these momentum maps.
We will use the following Lemma, a direct generalization of the corresponding formula on Lie groups, i.e. when the manifold $\bar M$ is absent, see e.g.
Proposition 13.4.3. in \cite{MaRa99}.

\begin{lemma}\label{mara}
Let $\tilde h\in\F(T^*\bar M\x T^*\O)^\O$ and let
$h\in\F(T^*\bar M\x \mathfrak{o}^*)$ be the function defined by
$\tilde h(\al_m,\al_g):=h(\al_m,\al_g g^{-1})$. Then the Hamiltonian vector field $X_{\tilde h}$ on $T^*\bar M\x T^*\O$, pushed
forward to $(T^*\bar M\x\mathfrak{o}^*)\times\O$ by the right
trivialization $\rho$ from \eqref{equiv_diffeo}, reads
\[
\left(\rho_*X_{\tilde h}\right)(\al_m,\si,g)=\left(\left(X_{h_\si}(\al_m),-\operatorname{ad}^*_{\frac{\delta h_{\al_m}}{\delta \si}}\si\right),\frac{\delta h_{\al_m}}{\delta \si}g\right),
\]
where $h_\si:T^*M \rightarrow \mathbb{R}  $ is defined by $h_\si( \alpha _m ):= h( \alpha _m , \sigma )$,  $h_{\al_m}: \mathfrak{o} ^\ast \rightarrow \mathbb{R}  $ is defined by $h_{ \alpha _m }( \sigma ):= h( \alpha _m , \sigma )$, and $X_{h _\sigma }$ denotes the Hamiltonian vector field associated to $h_ \sigma $ on $T^*\bar M$. 
\end{lemma}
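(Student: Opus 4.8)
The plan is to compute the pushforward of the Hamiltonian vector field $X_{\tilde h}$ under the right trivialization $\rho$ directly, by separating the contributions coming from the cotangent factor $T^*\bar M$ and from the group factor $T^*\O$. Since the symplectic form on $P_M=T^*\bar P = T^*\bar M \times T^*\O$ splits as the sum of the canonical forms on the two factors, the Hamiltonian vector field $X_{\tilde h}$ splits correspondingly, and the two factors interact only through the coupling of variables inside $\tilde h$. First I would treat the $T^*\bar M$ factor: fixing the group variables, the restriction of $\tilde h$ behaves like $h_\si$ (for the appropriate value of $\si=\al_g g^{-1}$), and the standard cotangent Hamiltonian vector field yields $X_{h_\si}(\al_m)$ as the first component, exactly as stated.

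Next I would treat the $T^*\O$ factor, which is where the real content lies. Here I would invoke the classical result that for a right-invariant (equivalently, here, $\O$-invariant under cotangent lift) Hamiltonian, the dynamics on $T^*\O$ pushed forward under right trivialization $T^*\O \simeq \mathfrak{o}^* \times \O$, $\al_g \mapsto (\al_g g^{-1}, g)$, gives Lie-Poisson dynamics on the $\mathfrak{o}^*$ factor together with the reconstruction equation on the $\O$ factor. This is precisely Proposition 13.4.3 in \cite{MaRa99}, here applied fiberwise in $\al_m$: the function $h_{\al_m}:\mathfrak{o}^*\to\RR$ plays the role of the reduced Hamiltonian on $\mathfrak{o}^*$, so its functional derivative $\frac{\delta h_{\al_m}}{\delta\si}\in\mathfrak{o}$ drives both the coadjoint motion $-\operatorname{ad}^*_{\frac{\delta h_{\al_m}}{\delta\si}}\si$ on the $\mathfrak{o}^*$ factor and the reconstruction $\frac{\delta h_{\al_m}}{\delta\si}\,g$ on the group factor.

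The only genuinely new feature relative to the Lie-group case is the presence of the extra base manifold $\bar M$, so the main technical point to verify is that the $T^*\bar M$ and $T^*\O$ dynamics combine correctly and that the $\al_m$-dependence of $h$ does not spoil the Lie-Poisson structure on the $\mathfrak{o}^*$ factor. I would check this by computing $T\rho\cdot X_{\tilde h}$ on a general tangent vector and pairing against the symplectic form, using that $\rho$ is the product of the identity on $T^*\bar M$ with the right trivialization on $T^*\O$. The chain rule applied to $\tilde h(\al_m,\al_g)=h(\al_m,\al_g g^{-1})$ shows that partial derivatives in the $T^*\bar M$ directions see $h_\si$ while partial derivatives in the group directions reproduce the Lie-Poisson/reconstruction formula with the functional derivative $\frac{\delta h_{\al_m}}{\delta\si}$; the cross terms vanish because the two symplectic factors are independent. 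I expect the main obstacle to be purely bookkeeping: tracking carefully the distinction between left and right trivializations and the resulting sign in the coadjoint term, so that $\rho(\al_q,\al_g)=((\al_q,\al_g g^{-1}),g)$ produces $-\operatorname{ad}^*$ rather than $+\operatorname{ad}^*$, matching the stated formula.
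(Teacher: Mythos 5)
Your proposal is correct and follows essentially the same route as the paper, which in fact gives no proof beyond remarking that the lemma is a direct generalization of Proposition 13.4.3 in \cite{MaRa99} to the situation where the extra factor $T^*\bar M$ is present. Your plan --- splitting $X_{\tilde h}$ along the product symplectic structure, applying the standard cotangent formula on $T^*\bar M$ with $h_\si$ and the right-trivialized Lie--Poisson/reconstruction formula fiberwise on $T^*\O$ with $h_{\al_m}$, while checking the sign of the coadjoint term --- is exactly the computation the paper leaves implicit.
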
 

\begin{proposition}
The group $\overline\Aut_{\ham}(T^*\bar M\x T^*\O)$ acts infinitesimally transitively on the level sets of the momentum map $\J_R$ given in \eqref{doi}.
\end{proposition}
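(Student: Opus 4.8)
The plan is to prove the nontrivial inclusion $\ker T_f\J_R\subseteq\g_{Q_{KK}}(f)$ at each point $f$ of a level set, where $\g_{Q_{KK}}(f)$ denotes the image of the infinitesimal action and $\g=\F(T^*\bar M\x\ou^*)$ is the Lie algebra of $\overline{\Aut}_{\ham}(T^*\bar P)$; the opposite inclusion will be immediate from the computation of $T_f\J_R$. Throughout I would work intrinsically on $P_M=T^*\bar P$, viewing a point of $Q_{KK}$ as an $\mathcal{O}$-equivariant embedding $f:P_S\to T^*\bar P$ and an element of $\g$ as an $\mathcal{O}$-invariant Hamiltonian $\tilde h=h\o\pi_M$ with $h\in\F(T^*\bar M\x\ou^*)$ and $\pi_M:T^*\bar P\to M$. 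By Lemma \ref{mara} the left infinitesimal generator of $h$ at $f=((\PP_\QQ,\si),\ga)$ is precisely the restriction $X_{\tilde h}\o f$ of the Hamiltonian vector field along $f$, whose three trivialized components $(X_{h_\si},-\ad^*_{\delta h/\delta\si}\si)$ on $T^*\bar M\x\ou^*$ together with the $\mathcal{O}$-motion $\tfrac{\delta h}{\delta\si}$ are exactly those displayed in the lemma.

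First I would differentiate $\J_R(f)=[f^*\Th_{\bar P}]$ (the intrinsic form \eqref{right_momap_autvol} of the trivialized expression \eqref{doi}). Cartan's formula together with $\Om_{\bar P}=-\dd\Th_{\bar P}$ gives $\frac{d}{dt}\big|_0 f_t^*\Th_{\bar P}=f^*\big({-}\mathbf{i}_{\delta f}\Om_{\bar P}+\dd(\Th_{\bar P}(\delta f))\big)$, and the exact term dies in the quotient $\Om^1_\O(P_S)/\dd\F_{\mathcal O}(P_S)$, so $T_f\J_R(\delta f)=-[f^*(\mathbf{i}_{\delta f}\Om_{\bar P})]$. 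Hence $\delta f\in\ker T_f\J_R$ if and only if $f^*(\mathbf{i}_{\delta f}\Om_{\bar P})$ is exact, say $=\dd\tilde G$ with $\tilde G\in\F_{\mathcal O}(P_S)$. For $\delta f=X_{\tilde h}\o f$ one has $\mathbf{i}_{\delta f}\Om_{\bar P}=\dd\tilde h$ along $f$, so $f^*(\mathbf{i}_{\delta f}\Om_{\bar P})=\dd(f^*\tilde h)$ is automatically exact; this already gives $\g_{Q_{KK}}(f)\subseteq\ker T_f\J_R$. Differentiating the trivialized \eqref{doi} instead yields the same kernel through the two conditions $\delta(\Ad^*_\ga\si)=0$ and $\delta[\PP_\QQ^*\Th_{\bar M}+\langle\si,\de^r\ga\rangle]=0$, which one checks to be equivalent to the single exactness statement; in particular the first forces the Lie--Poisson relation $\delta\si=-\ad^*_\ze\si$ with $\ze=\delta\ga\,\ga^{-1}$, matching the $\ou^*$-component of $X_{\tilde h}$ once $\tfrac{\delta h}{\delta\si}\o f=\ze$.

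Next, given $\delta f\in\ker T_f\J_R$, I would construct the required $\mathcal{O}$-invariant $\tilde h$ as in the ideal-fluid transitivity proof of \cite{GBVi2011}, carried out equivariantly. Set $\la:=\mathbf{i}_{\delta f}\Om_{\bar P}$, a section of $T^*(T^*\bar P)|_N$ along the compact $\mathcal{O}$-invariant submanifold $N=f(P_S)$. The kernel condition says that $\la|_{TN}$ is exact on $N$, equal to $\dd\tilde G$, so I first prescribe $\tilde h|_N:=\tilde G$; subtracting a smooth extension of $\tilde G$ yields a $1$-form along $N$ that vanishes on $TN$, to which Lemma \ref{function} applies (used with the $\mathcal{O}$-invariant Kaluza--Klein metric of Remark \ref{rem_KK}, so that the normal bundle, tubular neighbourhood and exponential map are all $\mathcal{O}$-equivariant), producing a fibrewise-linear function whose differential along $N$ supplies the remaining normal part of $\la$. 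Cutting off outside the tubular neighbourhood and averaging over the compact group $\mathcal{O}$ with its Haar measure makes $\tilde h$ $\mathcal{O}$-invariant while preserving $(\dd\tilde h)|_N=\la|_N$. Finally, the non-degeneracy of $\Om_{\bar P}$ upgrades the identity $(\mathbf{i}_{X_{\tilde h}}\Om_{\bar P})|_N=(\dd\tilde h)|_N=\la|_N=(\mathbf{i}_{\delta f}\Om_{\bar P})|_N$ to $X_{\tilde h}\o f=\delta f$, which is the asserted infinitesimal transitivity.

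The main obstacle is the construction of $\tilde h$ with prescribed differential $\la$ along $N$: Lemma \ref{function} only handles $1$-forms vanishing on $TN$, so the exactness of $\la|_{TN}$ furnished by the kernel condition is exactly what permits absorbing the tangential part into the boundary value $\tilde h|_N$ before invoking the lemma. Keeping the whole construction $\mathcal{O}$-equivariant on the non-compact total space $T^*\bar P$, where only the compactness of $N=f(P_S)$ is available, is the delicate point. The apparent coupling between the symplectic, Lie--Poisson, and $\mathcal{O}$-motion components of $X_{\tilde h}$ — which might look like an obstruction in the trivialized picture — in fact causes no difficulty, since by Lemma \ref{mara} all three descend from the single invariant function $\tilde h$ and are recovered simultaneously from the one equation $X_{\tilde h}\o f=\delta f$.
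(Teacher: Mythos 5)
Your argument is correct, and it reaches the same destination as the paper by a more intrinsic route. The paper works entirely in the trivialized picture: it differentiates the explicit formula \eqref{doi} using Lemma \ref{diff_log} for the derivative of the right logarithmic derivative, characterizes $\ker T\mathbf{J}_R$ by the two conditions $\operatorname{ad}^*_{u_\gamma\gamma^{-1}}\sigma+v_\sigma=0$ and exactness of $\mathbf{P}_\mathbf{Q}^*\pounds_{v_{\mathbf{P}_\mathbf{Q}}}\Theta_{\bar M}+\langle\sigma,\mathbf{d}(u_\gamma\gamma^{-1})\rangle$, and then builds $h$ downstairs on $T^*\bar M\times\mathfrak{o}^*$ in two steps (extend the primitive $h_0$ to $h_1$, then correct by an $h_2$ obtained from Lemma \ref{function} applied along $\eta(S)$). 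You instead differentiate the intrinsic formula $\mathbf{J}_R(f)=[f^*\Theta_{\bar P}]$ via Cartan's formula, obtaining the single kernel condition that $f^*(\mathbf{i}_{\delta f}\Omega_{\bar P})\in\mathbf{d}\mathcal{F}_\mathcal{O}(P_S)$, and perform the same two-step extension upstairs on $T^*\bar P$ along the compact invariant submanifold $N=f(P_S)$, restoring $\mathcal{O}$-invariance by averaging; nondegeneracy of $\Omega_{\bar P}$ then yields $X_{\tilde h}\circ f=\delta f$ in one stroke. Both proofs hinge on the same mechanism — the exactness furnished by the kernel condition absorbs the tangential part of the prescribed one-form into a boundary value, and Lemma \ref{function} supplies the normal part — but your version dispenses with Lemma \ref{diff_log} and the componentwise bookkeeping, at the price of having to keep the tubular neighborhood, cutoff and extension equivariant on the non-compact total space (which your averaging step handles correctly, since $N$ and $\lambda=\mathbf{i}_{\delta f}\Omega_{\bar P}$ are $\mathcal{O}$-invariant). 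The trivialized computation the paper carries out is not wasted effort, though: it is exactly what identifies the kernel conditions with the concrete formula \eqref{doi} that is quoted elsewhere, whereas your equivalence of the two descriptions is only asserted, not checked.
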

\begin{proof} Recall that the Lie algebra of $\overline\Aut_{\ham}(T^*\bar M\x T^*\O)$ consists of Hamiltonian vector fields $X_{\tilde h}$ associated to $ \mathcal{O} $-invariant Hamiltonian functions
$\tilde h \in \mathcal{F} (T^*\bar M \times T^* \mathcal{O} )^ \mathcal{O} $. 
Given $f=( \eta , \gamma )=( (\PP_\QQ,\si), \gamma  )\in Q_{KK}$, and using Lemma \ref{mara} above, the infinitesimal action reads
\begin{equation}\label{infinit_action} 
(X_{\tilde h})_{Q_{KK}}(\et,\ga)=\left(\left(X_{h_{\si}}\o\PP_\QQ,-\ad^*_{\frac{\de h}{\de\si}\o\et}\si\right),\left(\frac{\de h}{\de\si}\o\et\right)\ga\right).
\end{equation} 

We shall now write the derivative of the momentum map $\mathbf{J} _R$
\eqref{doi} at $( \eta , \gamma ) \in Q_{KK}$ in direction $(v_ \eta , u_ \gamma ) \in T_{(\eta , \gamma )} Q_{KK}$.
Note that we have $v_\et=(v_{\PP_\QQ},v_\si)$, where $v_{\PP_\QQ}:S\to T(T^*\bar M)$ is a smooth map covering $\PP_\QQ:S \rightarrow T^*\bar M$ and $v_\si:S\to\ou^*$ is a smooth map, and $u_\ga:S\to T\O$ is a smooth map covering $\ga:S \rightarrow \mathcal{O}$. In particular $u_\ga\ga^{-1}\in\F(S,\ou)$. Using the expression \eqref{deri} of the differential of the right logarithmic derivative map 
$\de^r:\F(S,\O)\mapsto \Om^1(S,\ou)$ shown in Lemma \ref{diff_log} below, we obtain the expression
\begin{multline*}
T_{(\et,\ga)}\J_R \cdot (v_\et,u_\ga)\\
=\left([\PP_\QQ^*\pounds _{v_{\PP_\QQ}}\Th_{\bar M}+\langle\ad^*_{u_\ga\ga^{-1}}\si+v_\si,\de^r\ga\rangle+\langle\si,\dd(u_\ga\ga^{-1})\rangle],\Ad^*_\ga(\ad^*_{u_\ga\ga^{-1}}\si+v_\si)\right)\\
\in\Om^1(S)/\dd\F(S)\x\F(S,\ou^*)=\mathfrak{aut}_{\vol}(S\x\O)^*.
\end{multline*}

In order to obtain the transitivity result, we have to show that any vector $(v_\et,u_\ga)$ in the kernel of $T_{(\et,\ga)}\J_R$ can be obtained from an infinitesimal generator
of the left $\overline\Aut_{\ham}(T^*\bar S\x T^*\O)$-action.
More precisely, for any vector $(v_\et,u_\ga)\in T_{( \eta , \gamma )}Q_{KK}$ with $\ad^*_{u_\ga\ga^{-1}}\si+v_\si=0$
and such that $\PP_\QQ^*\pounds _{v_{\PP_\QQ}}\Th_{\bar M}+\langle\si , \dd(u_\ga\ga^{-1})\rangle$ is an exact 1-form on $S$,
there exists $h\in\F(T^*\bar M\x\ou^*)$ such that  
$$v_{\PP_\QQ}=X_{ h_{\si}}\o\PP_\QQ,\quad 
v_\si=-\ad^*_{\frac{\de h}{\de\si}\o\et}\si,\quad u_\ga=\left( \frac{\de h}{\de\si}\o\et\right) \ga.$$

Define $j:=u_\ga\ga^{-1}\in\F(S,\ou)$, so that we have $u_\ga=j\ga$ and $v_\si=-\ad^*_j\si$. We will show that there exists 
$h\in\F(T^*\bar M\x\ou^*)$ such that 
\begin{equation}\label{to_be_shown} 
j= \frac{\delta h}{\delta \sigma }\circ \eta .
\end{equation} 
To achieve this we use the fact that $\PP_\QQ^*\pounds _{v_{\PP_\QQ}}\Th_{\bar M}+\langle\si,\dd j\rangle$ is exact, 
hence there exists $h_0\in\F(S)$ such that $\PP_\QQ^*\mathbf{i} _{v_{\PP_\QQ}}\dd\Th_{\bar M}+\langle\dd\si, j\rangle=\dd h_0$.
By a standard argument using a partition of unity,
we extend in a smooth way the function $h_0$ to a function 
$h_1\in\F(T^*\bar M\x \ou^*)$ via the embedding 
$ \eta :S \rightarrow T^*\bar M\x \ou^* $, i.e. such that $h_0=h_1\o\et$.
Then we consider the 1-form 
$$
\la:=(\dd h_1)\o\et-\mathbf{i} _{v_{\PP_\QQ}}(\dd\Th_{\bar M}\o\PP_\QQ)-j_\et
$$ 
on $T^*\bar M\x \ou^*$ along $S$, where $j_\et$ is the 1-form on $T^*\bar M\x\ou^*$ along $\et$
with first component zero and second component given by $j:S\to\ou={\ou^{**}}$. 

The form $\la$ vanishes on $T(\et(S))$, the tangent space to the submanifold $\et(S)$ of $T^*\bar M\x\ou^*$.
From 
Lemma \ref{function},
there exists a function  $h_2\in\F(T^*\bar M\x\ou^*)$ such that $\la =(\dd h_2)\o\et$.
The function $ h:=h_1-h_2$ satisfies $(\dd  h)\o\et=\mathbf{i} _{v_{\PP_\QQ}}(\dd\Th_{\bar M}\o\PP_\QQ)+j_\et$, therefore it verifies \eqref{to_be_shown} as desired.
 
From \eqref{to_be_shown} we obtain $u_\ga=(\frac{\de h}{\de\si}\o\et)\ga$ and $v_\si=-\ad^*_{\frac{\de h}{\de\si}\o\et}\si$.
Moreover, $(\dd h_{\si})\o\PP_\QQ=\mathbf{i} _{v_{\PP_\QQ}}(\dd\Th_{\bar M}\o\PP_\QQ)$,
hence $v_{\PP_\QQ}=X_{h_{\si}}\o\PP_\QQ$. Thus we have shown that any $(v_\eta , u _\gamma )$ in the kernel of $T_{( \eta , \gamma )} \mathbf{J} _R$ can be written as \eqref{infinit_action} for some $h:T^*\bar M\x \ou^* \rightarrow \mathbb{R} $. 
\end{proof}

\begin{lemma}\label{diff_log} 
The differential of the right logarithmic derivative
\[
\de^r:\F(M,\O)\to\Om^1(M,\ou),\quad \de^r\ga=(\dd \ga)\ga^{-1},
\]
at the point $\ga\in\F(M,\O)$ in direction $u_\ga\in\Ga(\ga^*T\O)$ is
\begin{equation}\label{deri}
\dd_\ga\de^r ( u_\ga)=\dd(u_\ga\ga^{-1})+\ad_{u_\ga\ga^{-1}}\de^r\ga.
\end{equation}
\end{lemma}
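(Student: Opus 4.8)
The plan is to compute the differential by differentiating along a curve. I would pick a smooth path $t\mapsto\ga_t\in\F(M,\O)$ with $\ga_0=\ga$ and $\pa_t|_{t=0}\ga_t=u_\ga$, and then evaluate
\[
\dd_\ga\de^r(u_\ga)=\pa_t\big|_{t=0}\de^r\ga_t=\pa_t\big|_{t=0}\big((\dd\ga_t)\ga_t^{-1}\big).
\]
Since $\O$ is compact, I would fix at the outset a faithful finite-dimensional representation $\O\hookrightarrow GL(n,\RR)$, which lets me treat all the pointwise products as products of matrix-valued functions and to use the elementary identities $\pa_t(\dd\ga_t)=\dd(\pa_t\ga_t)$ and $\pa_t(\ga_t^{-1})=-\ga_t^{-1}(\pa_t\ga_t)\ga_t^{-1}$. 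Applying the Leibniz rule at $t=0$ then gives
\[
\dd_\ga\de^r(u_\ga)=(\dd u_\ga)\ga^{-1}-(\dd\ga)\ga^{-1}u_\ga\ga^{-1}.
\]

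Next I would introduce the $\ou$-valued function $j:=u_\ga\ga^{-1}\in\F(M,\ou)$, i.e.\ the right trivialization of $u_\ga$, so that the target formula \eqref{deri} reads $\dd j+\ad_j\de^r\ga$. The key algebraic step is to rewrite the first term using Leibniz once more: from $\dd(u_\ga\ga^{-1})=(\dd u_\ga)\ga^{-1}-u_\ga\ga^{-1}(\dd\ga)\ga^{-1}$ I get $(\dd u_\ga)\ga^{-1}=\dd j+j\,\de^r\ga$, while the remaining term is simply $(\dd\ga)\ga^{-1}u_\ga\ga^{-1}=(\de^r\ga)\,j$. Substituting these back yields
\[
\dd_\ga\de^r(u_\ga)=\dd j+j\,\de^r\ga-(\de^r\ga)\,j=\dd j+[j,\de^r\ga]=\dd(u_\ga\ga^{-1})+\ad_{u_\ga\ga^{-1}}\de^r\ga,
\]
which is exactly \eqref{deri}.

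The computation itself is routine once set up; the only point requiring care is the meaning of the intermediate products $j\,\de^r\ga$ and $(\de^r\ga)\,j$ for an abstract (non-matrix) structure group, where individually they are not defined, although their difference $[j,\de^r\ga]=\ad_j\de^r\ga$ always is. This is precisely why I would invoke compactness of $\O$ to pass to a matrix model first: it makes every intermediate expression literally meaningful and the cancellation manifest, while the final, representation-independent answer is expressed through the bracket and therefore valid intrinsically. Alternatively, one could phrase the same computation invariantly in terms of the right Maurer--Cartan form $\om^r$ of $\O$ (so that $\de^r\ga=\ga^*\om^r$) together with its structure equation, but the matrix-model route is the most economical.
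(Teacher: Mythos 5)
Your computation is correct, and it reaches \eqref{deri} by a genuinely different route from the paper. The paper's proof is a two-line argument from the cocycle identity $\de^r(\ga'\ga)=\de^r\ga'+\Ad_{\ga'}\de^r\ga$: differentiating in $\ga'$ at $\ga'=e$ in the direction $\nu=u_\ga\ga^{-1}$ (so that the curve $\ga'_t\ga$ passes through $\ga$ with velocity $u_\ga$) immediately yields $\dd_\ga\de^r(u_\ga)=\dd_e\de^r(u_\ga\ga^{-1})+\ad_{u_\ga\ga^{-1}}\de^r\ga$, and then one only needs the fact that the differential of $\de^r$ at the identity is the de Rham differential $\dd:\F(M,\ou)\to\Om^1(M,\ou)$. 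That argument is intrinsic (no representation is chosen) and pushes all the work into two standard facts about logarithmic derivatives. Your proof instead differentiates $(\dd\ga_t)\ga_t^{-1}$ bare-handed via the Leibniz rule in a faithful matrix model of $\O$ (available here since $\O$ is assumed compact), and then reassembles the terms into $\dd j+[j,\de^r\ga]$. This is more elementary and self-contained — it does not presuppose the cocycle identity — at the cost of the representation-dependent intermediate expressions $j\,\de^r\ga$ and $(\de^r\ga)\,j$, a point you correctly flag and correctly resolve by noting that only their difference $\ad_j\de^r\ga$ survives. Both arguments are complete; yours trades the paper's structural shortcut for an explicit verification.
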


\begin{proof}
By taking the derivative of the identity $\de^r(\ga'\ga)=\de^r\ga'+\Ad_{\ga'}\de^r\ga$, relative to $ \gamma'$ at $\ga'=e$ in direction $\nu=u_\ga\ga^{-1}$, one obtains
\[
\dd_\ga\de^r ( u_\ga)=\dd_e\de^r (u_\ga\ga^{-1})+\ad_{u_\ga\ga^{-1}}\de^r\ga.
\]
The lemma follows now from the formula $\dd_e\de^r (\nu) =\dd \nu $ for all $\nu \in\F(M,\mathfrak{o} )$.
\end{proof}

\begin{proposition}
The group $\Aut_{\vol}(S\x\O)$ acts transitively on the level sets of the momentum map $\J_L$ given in  \eqref{unu}.
\end{proposition}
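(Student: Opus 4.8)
The plan is to exploit the fact that the momentum map $\J_L$ in \eqref{unu} depends only on the embedding component $\et=(\PP_\QQ,\si)\in\Emb(S,T^*\bar M\x\ou^*)$ and not on $\ga\in\F(S,\O)$, being given by the pushforward $\J_L(\et,\ga)=\et_*\mu_S$. Thus two points $(\et,\ga)$ and $(\et',\ga')$ lie in the same level set of $\J_L$ precisely when $\et_*\mu_S=\et'_*\mu_S$, i.e. when $\int_S(h\o\et)\mu_S=\int_S(h\o\et')\mu_S$ for every $h\in\F(T^*\bar M\x\ou^*)$. I must produce an element $(\ps,b)\in\Aut_{vol}(S\x\O)=\Diff_{vol}(S)\,\circledS\,\F(S,\O)$ with $\et'=\et\o\ps$ and $\ga'=(\ga\o\ps)b$, so that $(\et,\ga)\cdot(\ps,b)=(\et',\ga')$.

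First I would show that the two embeddings have the same image, $\et(S)=\et'(S)$, by the support argument used in the proof of the second Proposition of Section \ref{sec_3}. Indeed, if some point $\et(x_0)$ failed to lie in the compact set $\et'(S)$, one could choose a nonnegative test function $h$ supported in a small neighborhood of $\et(x_0)$ that vanishes on $\et'(S)$ and satisfies $\int_S(h\o\et)\mu_S>0$ (possible since $\mu_S$ is nowhere zero), contradicting the level-set identity because $\int_S(h\o\et')\mu_S=0$. Hence $\et(S)\subset\et'(S)$, and by symmetry $\et(S)=\et'(S)$. Since both are embeddings of the compact $S$ with the same image, $\ps:=\et^{-1}\o\et'$ is a well-defined diffeomorphism of $S$ satisfying $\et'=\et\o\ps$.

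Next I would verify that $\ps$ is volume preserving, the point that distinguishes this from the non-volume-preserving case of Section \ref{sec_3} (where a Jacobian factor appeared in \eqref{right_cotangent}). Using the functoriality of the pushforward together with $\et'=\et\o\ps$, the level-set identity becomes $\et_*\mu_S=\et'_*\mu_S=\et_*(\ps_*\mu_S)$. Since $\et$ is an embedding, the pushforward $\et_*$ is injective on densities on $S$ (any smooth function on $S$ arises as $h\o\et$ for a suitable $h$ defined on the target), whence $\ps_*\mu_S=\mu_S$, that is $\ps\in\Diff_{vol}(S)$. Finally, setting $b:=(\ga^{-1}\o\ps)\ga'=(\ga\o\ps)^{-1}\ga'\in\F(S,\O)$, a direct check gives $(\ga\o\ps)b=\ga'$, so that $(\et,\ga)\cdot(\ps,b)=(\et\o\ps,(\ga\o\ps)b)=(\et',\ga')$ and transitivity follows.

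The main obstacle is the step that simultaneously yields $\et(S)=\et'(S)$ and extracts a \emph{volume-preserving} diffeomorphism $\ps$: everything hinges on recognizing that $\et_*\mu_S$ encodes not only the image submanifold (through its support) but also the volume form $\mu_S$ (through the injectivity of $\et_*$), and it is precisely this second piece of information that forces $\ps$ into $\Diff_{vol}(S)$. The remaining manipulations with $b$ are routine and mirror the corresponding step in the second Proposition of Section \ref{sec_3}.
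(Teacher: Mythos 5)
Your argument is correct and follows essentially the same route as the paper: deduce $\et(S)=\et'(S)$ from the level-set identity, obtain $\ps$ with $\et'=\et\o\ps$, use injectivity of the pushforward along the embedding to force $\ps^*\mu_S=\mu_S$, and set $b=(\ga^{-1}\o\ps)\ga'$. The only difference is that you spell out the support argument for equality of images, which the paper leaves implicit here (having given it earlier for the compressible case).
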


\begin{proof}
We need to show that  given two embeddings $(\et_1,\ga_1),(\et_2,\ga_2)\in\Emb_\O(S\x\O,T^*\bar M\x T^*\O)$ in the same level set of $\J_L$,
there exists a volume preserving automorphism $(\ps,b)\in\Aut_{\vol}(S\x\O)$, such that $(\et_2,\ga_2)=(\et_1\o\ps,(\ga_1\o\ps)b)$.

The equality $\J_L(\et_1,\ga_1)=\J_L(\et_2,\ga_2)$ reads
$\int_S(h\o\et_1)\mu_S=\int_S(h\o\et_2)\mu_S$, for all functions $h\in\F(T^*\bar S\x\ou^*)$,
so the embeddings $\et_1$ and $\et_2$ have the same image in $T^*\bar M\x\ou^*$. 
Therefore, there exists a unique diffeomorphism $\ps$ of $S$
such that $\et_2=\et_1\o\ps$.

It follows that the volume form $\nu_S:=(\ps^{-1})^*\mu_S$ satisfies 
$({\eta _{2}} )_*\mu_S=(\et_1)_*\nu_S$,
\ie $\int_S(h\o\et_1)\mu_S={\int_S(h\o\et_2)\mu_S} =\int_S(h\o\et_1)\nu_S$ 
for all $h\in\F(T^*\bar S\x\ou^*)$. 
Because $\et_1$ and $\et_2$ are embeddings, we conclude that $\mu_S=\nu_S$,
so $\ps^*\mu_S=\mu_S$ and $\ps$ is a volume preserving diffeomorphism of $S$. 

By defining the map $b:=(\ga_1^{-1}\o\ps)\ga_2\in\F(S,\O)$, 
we obtain that the automorphism $(\ps,b)\in\Aut_{\vol}(S\x\O)$
satisfies $(\et_2,\ga_2)=(\et_1\o\ps,(\ga_1\o\ps)b)=(\et_1,\ga_1)\cdot(\ps,b)$, as required.
\end{proof}

From the two transitivity results above, we obtain that the actions are mutually completely orthogonal and hence we get the following result.

\begin{theorem} The momentum maps \eqref{unu} and \eqref{doi} associated to the actions of the groups  $\overline\Aut_{\ham}(T^*\bar M\x T^*\O)\x_B\RR$ and $\Aut_{\vol}(S\x\O)$ on
$\Emb_{\O}(S\x\O,T^*{\bar M}\x T^*\mathcal{O})$, form a dual pair for the EPAut$_{\,vol}$ equations

\begin{picture}(80,100)(-70,0)%
\put(55,75){$\Emb_{\O}(S\x\O,T^*{\bar M}\x T^*\mathcal{O})$}

\put(90,50){$\mathbf{J}_L$}

\put(160,50){$\mathbf{J}_R$}

\put(40,15){$\mathcal{F}(T^*\bar M\x\mathfrak{o}^*)^*$
}

\put(160,15){$\mathfrak{aut}_{\vol}(S\x\O)^*$.
}

\put(130,70){\vector(-1, -1){40}}

\put(135,70){\vector(1,-1){40}}

\end{picture}
\end{theorem}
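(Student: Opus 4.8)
The plan is to deduce the theorem directly from the two preceding Propositions by invoking the mutual complete orthogonality criterion set up in Section \ref{sec_3}. We are in the general situation \eqref{dp} of two commuting Hamiltonian actions on a symplectic manifold equipped with equivariant momentum maps $\J_L$ and $\J_R$. Here the left action is that of the Vlasov chromomorphism group $\overline{\Aut}_{\ham}(T^*\bar M\x T^*\O)\x_B\RR$, with Lie algebra $\h=\F(T^*\bar M\x\ou^*)$ and momentum map $\J_L$ from \eqref{unu}, and the right action is that of $\Aut_{\vol}(S\x\O)$, with Lie algebra $\g=\mathfrak{aut}_{\vol}(S\x\O)$ and momentum map $\J_R$ from \eqref{doi}. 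That these two actions commute was recorded at the beginning of this section, and both momentum maps are (formally) equivariant.

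By the reduction lemma for momentum maps used in Section \ref{sec_3}, the kernels of the tangent maps are the symplectic orthogonals of the orbit directions, $\ker T\J_L=\h_M^\om$ and $\ker T\J_R=\g_M^\om$. Hence proving the mutual complete orthogonality identities \eqref{mutcomort}, namely $\g_M=\h_M^\om$ and $\h_M=\g_M^\om$, amounts to proving the two transitivity statements $\g_M=\ker T\J_L$ and $\h_M=\ker T\J_R$. These are precisely the two preceding Propositions: the first asserts that $\overline{\Aut}_{\ham}(T^*\bar M\x T^*\O)$ acts infinitesimally transitively on the level sets of $\J_R$, giving $\h_M=\ker T\J_R=\g_M^\om$, while the second asserts that $\Aut_{\vol}(S\x\O)$ acts transitively on the level sets of $\J_L$, giving $\g_M=\ker T\J_L=\h_M^\om$. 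The central $\RR$-factor in the extension acts trivially on $Q_{KK}$ (the constant functions in $\F(T^*\bar M\x\ou^*)$ have vanishing infinitesimal generator in \eqref{infinit_action}), so it does not enlarge $\h_M$ and the transitivity proved for $\overline{\Aut}_{\ham}(T^*\bar M\x T^*\O)$ transfers verbatim to the extension; the extension serves only to render $\J_L$ equivariant.

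With both identities of \eqref{mutcomort} established, the commuting actions are mutually completely orthogonal. As explained in Section \ref{sec_3}, in the infinite dimensional setting this implies both dual pair identities \eqref{strong_dual_pair}, namely $(\ker T\J_L)^\om=\ker T\J_R$ and $(\ker T\J_R)^\om=\ker T\J_L$; indeed, each of the two identities in \eqref{mutcomort} already yields both conditions in \eqref{strong_dual_pair} on its own. This is exactly the dual pair property claimed by the diagram, completing the proof.

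The substantive work thus resides entirely in the two transitivity Propositions, and the present theorem is their immediate corollary. The only point requiring care is conceptual rather than computational: in infinite dimensions the two identities \eqref{strong_dual_pair} are no longer equivalent, so one must be sure the orbit-equals-kernel transitivity statements are obtained directly, as in the Propositions, rather than by attempting to close the double symplectic orthogonal $\g_M^{\om\om}$, which need not recover $\g_M$ under a weak symplectic form. Mutual complete orthogonality is precisely the device that circumvents this bidual-closure obstruction.
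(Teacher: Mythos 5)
Your proposal is correct and follows exactly the paper's route: the theorem is deduced as an immediate corollary of the two preceding transitivity Propositions via the mutual complete orthogonality criterion of Section \ref{sec_3}, which in turn yields both identities in \eqref{strong_dual_pair}. Your additional remark that the central $\RR$-factor acts trivially and so does not affect the orbit directions is a sensible clarification, but the argument is otherwise the same as the paper's.
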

\paragraph{Conclusion and future works.} In this paper, we have shown that the pairs of momentum maps associated to the EPAut equations and its incompressible version arise from mutually completely orthogonal actions and are therefore dual pairs. We have obtained this result for the case of trivial principal bundles. For the incompressible situation we have restricted our study to the physical relevant case of the Yang-Mills phase space. Further studies will be necessary in order to show the same results without these restrictions.


{\footnotesize

{\footnotesize

\bibliographystyle{new}

\begin{thebibliography}{300}


\bibitem{Arnold1966}
V.I. Arnold [1966], Sur la g\'{e}om\'{e}trie diff\'{e}rentielle des
groupes de {L}ie de dimension infinie et ses applications \`{a}
l'hydrodynamique des fluides parfaits,
\textit{Ann. Inst. Fourier, Grenoble}
{\bf 16}, 319--361.

\bibitem{BaMaMu2006}
B.~A. Bambah, S.~M. Mahajan and C. Mukku [2006], 
{Yang--Mills magnetofluid unification}, \textit{Phys. Rev. Lett.} \textbf{97}, 072301.

\bibitem{BiJaLiNaPi2003}
B. Bistrovic, R. Jackiw, H. Li, V.~P. Nair and S.-Y. Pi [2003],
{Non-abelian fluid dynamics in Lagrangian formulation}, 
\textit{Phys. Rev. D} \textbf{67}(2), 025013.

\bibitem{CaHo1993}
R. Camassa and D.~D. Holm [1993], An integrable shallow water equation with
peaked solitons, {Phys. Rev. Lett.} \textbf{71}, 1661--1664.

\bibitem{ChLiZh2005}
M. Chen, S. Liu, and Y. Zhang [2005], A two-component generalization of the Camassa--Holm equation and its solutions, \textit{Lett. Math. Phys.} \textbf{75}, 1--15.

\bibitem{CuRo1982}
R. Cushman and D. Rod [1982], Reduction of the semisimple 1:1 resonance, \textit{Physica D} \textbf{6}, 105--112.

\bibitem{GBRa2008a}
F. Gay-Balmaz and T.~S. Ratiu [2008], Reduced Lagrangian and Hamiltonian formulations of Euler-Yang-Mills fluids, \textit{J. Symplectic Geom.} \textbf{6}, 189--237.

\bibitem{GBRa2009}
F. Gay-Balmaz and T.~S. Ratiu [2009], The geometric structure of complex fluids, \textit{Adv. Appl. Math.} \textbf{42}, 176--275.

\bibitem{GBRa2011a}
F. Gay-Balmaz and T.~S. Ratiu [2011a], Geometry of nonabelian charged fluids, \textit{Dynamics of PDEs} \textbf{8}(1), 5--19.

\bibitem{GBRa2011b}
F. Gay-Balmaz and T.~S. Ratiu [2011b], Clebsch optimal control formulation in mechanics, \textit{J. Geom. Mech.}, \textbf{3}(1), 47--79.

\bibitem{GBTrVi2013}
F. Gay-Balmaz, C. Tronci, and C. Vizman [2013], Geometric dynamics on the automorphism group of principal bundles: geodesic flows, dual pairs and chromomorphism groups, \textit{J. Geom. Mech.} \textbf{5}(1), 39--84. 

\bibitem{GBVi2011}
F. Gay-Balmaz and C. Vizman [2012], Dual pairs in fluid dynamics, \textit{Ann. Global Anal. Geom.}, \textbf{ 41}(1), 1--24.

\bibitem{GBV2012}
F. Gay-Balmaz and C. Vizman [2013], The dual pair for free boundary fluids, Preprint.

\bibitem{GBVi2013}
F. Gay-Balmaz and C. Vizman [2013], Isotropic submanifolds and coadjoint orbits of the Hamiltonian group, Preprint.

\bibitem{GiHoKu1983}
J. Gibbons, D.~D. Holm, and B.~A. Kupershmidt [1983], The Hamiltonian structure of classical chromohydrodynamics, \textit{Physica D} \textbf{6}, 179--194.

\bibitem{GoSt1987}
M. Golubitsky and I. Stewart [1987], Generic bifurcation of Hamiltonian systems with symmetry, \textit{Physica D} \textbf{24}, 391--405.

\bibitem{HaVi2004}
S. Haller and C. Vizman [2004], Non--linear Grassmannians as coadjoint orbits,
\textit{Math. Ann.} \textbf{329}, 771--785.

\bibitem{Hi1976}
M.~W. Hirsch [1976]
{\it Differential topology},
Graduate Texts in Math. \textbf{33}, Springer.

\bibitem{Ho1986}
D.~D. Holm [1986], Hamiltonian dynamics of a charged fluid, including electro- and magnetohydrodynamics, \textit{Physics Letters A} \textbf{114}, 137--141.

\bibitem{Ho2002}
D.~D. Holm [2002], Euler-Poincar\'e dynamics of perfect complex fluids.
In \textit{Geometry, mechanics, and dynamics: 60th Birthday Volume for J. E. Marsden}. P. Holmes, P. Newton, and
A. Weinstein, eds., Springer-Verlag.

\bibitem{HoKu1988}
D.~D. Holm, B. A. Kupershmidt [1988], The analogy between spin glasses and Yang-Mills
fluids, \textit{J. Math. Phys.} \textbf{29}, 21--30.


\bibitem{HoMa2004}
D.~D. Holm and J.~E. Marsden [2004], Momentum maps and
measure-valued solutions (peakons, filaments and sheets) for the
EPDiff equation, \textit{Progr. Math.} \textbf{232}, 203--235.

\bibitem{HoTr2008}
D.~D. Holm and C. Tronci [2008], Geodesic flows on
semidirect-product Lie groups: geometry of singular measure-valued
solutions, \textit{Proc. R. Soc. A} \textbf{465}, 457--476.

\bibitem{HoOnTr2009}
D.~D. Holm, L. \'O N\'araigh, and C. Tronci [2009], Singular
solutions of a modified two-component Camassa-Holm equation, \textit{Phys. Rev. E} \textbf{79}, 016601.

\bibitem{IsLoMi2006}
R.~S. Ismagilov, M. Losik, and P.~W. Michor [2006], A 2-cocycle on a group of symplectomorphisms, \textit{Moscow Math. J.} \textbf{6} (2006), 307--315.

\bibitem{Iw1985}
T. Iwai [1985], On reduction of two degree of freedom Hamiltonian systems by an $S^1$ action, and $SO_0(1,2)$ as a dynamical group, \textit{J. Math. Phys.} \textbf{26}, 885--893.

\bibitem{JaNaPiPo2004}
R. Jackiw, V.~P. Nair, S.-Y. Pi and A.~P. Polychronakos [2004], 
{Perfect fluid theory and its extensions}, \textit{J. Phys. A}
\textbf{37} (42) (2004), R327--R432.

\bibitem{KM}
A. Kriegl and P.~W. Michor [1997],
{\it The convenient setting of global analysis},
Mathematical Surveys and Monographs {\bf 53},
American Mathematical Society, Providence, RI.

\bibitem{Ku2007}
P.~A. Kuz'min [2007], Two-component generalizations of the Camassa--Holm equation, \textit{Math. Notes} \textbf{81}, 130--134.

\bibitem{Libermann-Marle}
P. Libermann and C.-M. Marle [1987], 
{\it Symplectic Geometry and Analytical Mechanics}, D. Reidel Publishing Company.

\bibitem{Ma1987}
J.~E. Marsden [1987], Generic bifurcation of Hamiltonian systems with symmetry, 
\textit{appendix to Golubitsky and Stewart, Physica D} \textbf{24}, 391--405.

\bibitem{MaRa99}
J.~E. Marsden and T.~S. Ratiu [1999], \textit{Introduction to Mechanics and Symmetry}, Second Edition, Springer.

\bibitem{MaRaWe1984}
J.~E. Marsden, T.~S. Ratiu and A. Weinstein [1984], Semidirect product and
reduction in mechanics, \textit{Trans. Amer. Math. Soc.}
\textbf{281}, 147--177.

\bibitem{MaWe83}
J.~E. Marsden and A. Weinstein [1983], Coadjoint orbits, vortices,
and Clebsch variables for incompressible fluids, \textit{Phys. D} \textbf{7}, 305--323.

\bibitem{Mo84}
R. Montgomery [1984], Canonical formulations of a classical particle in a Yang-Mills field and Wong's equations, \textit{Lett. Math. Phys.} \textbf{8}, 59--67.


\bibitem{Vi2001}
C. Vizman [2001], Geodesics on extensions of Lie groups and stability: the superconductivity equation, \textit{Physics Letters A}, \textbf{284}, 23--30.

\bibitem{Vi2008}
C. Vizman [2008], Geodesic equations on diffeomorphism groups, \textit{SIGMA Symmetry Integrability Geom. Methods Appl.} \textbf{4}.

\bibitem{We83}
A. Weinstein, [1983], The local structure of Poisson manifolds, \textit{J. Diff. Geom.} \textbf{18}, 523--557.

\end{thebibliography}
\addcontentsline{toc}{section}{References}

\end{document}